\documentclass[12pt,a4paper]{amsart}
\usepackage{amsmath, amssymb, amsthm, yhmath}
\input cyracc.def

\usepackage{graphicx}
\usepackage{caption}
\usepackage{float}
\usepackage{mathtools}
\usepackage{xcolor}
\usepackage[colorlinks=false,
            linkbordercolor={0 0 1},
            citebordercolor={0 1 0},
            urlbordercolor={1 0 0}
]{hyperref}
\numberwithin{equation}{section}
\usepackage{enumerate}
\newtheorem{theorem}{Theorem}[section]
\newtheorem{lemma}[theorem]{Lemma}
\newtheorem{claim}{Claim}[section]
\newtheorem{corollary}[theorem]{Corollary}
\newtheorem{proposition}[theorem]{Proposition}
\theoremstyle{definition}

\newtheoremstyle{remarkstyle}
   {}
   {}
  {\normalfont}
  {}
  {\itshape}
  {.}
  { }
  {}
\theoremstyle{remarkstyle}
\newtheorem*{remark}{Remark}

\allowdisplaybreaks[4]

\newcommand{\supp}{\operatorname{supp}}

\begin{document}

\author[K. Matsuzaki]{Katsuhiko Matsuzaki \textsuperscript{$a$}}
\address{$a.$ Department of Mathematics, School of Education, Waseda University, Tokyo 169-8050, Japan}
\email{matsuzak@waseda.jp}

\author[F. Tao]{Fei Tao \textsuperscript{$b$}}
\address{$b.$ Beijing International Center for Mathematical Research, Peking University, Beijing 100871, P. R. China}
\email{ferrytau@pku.edu.cn}

\thanks{The first author is partially supported by Japan Society for the Promotion of Science (KAKENHI 23K25775 and 23K17656); the second author is partially supported by National Key R \& D Program of China (2025YFA1017500)}

\subjclass[2020]{Primary 30C62,30H35,30H30; Secondary 46E15, 53A04}

\title[$\mathrm{BMO}$ Domains and Quasicircles Associated with Zygmund Functions]{$\mathrm{BMO}$ Domains and Quasicircles Associated with Zygmund Functions}
\keywords{$\mathrm{BMO}$ Domains, (Little) Zygmund Functions, $\mathrm{BMOA}$, $\mathrm{VMOA}$, Asymptotically Smooth Curves, Asymptotically Conformal Curves, (Little) Bloch Functions}

\begin{abstract}
We investigate graph domains and graph curves from the viewpoints of
$\mathrm{BMOA}$, Zygmund regularity, and asymptotic geometry.
First, we prove that every graph domain is a $\mathrm{BMO}$ domain, which provides a
proof of a statement of Astala and Zinsmeister.
We then use a Weierstrass-type function to construct a Bishop--Jones quasicircle that is asymptotically conformal but not asymptotically smooth. This example yields the strict inclusions
\[
 \mathrm{VMOA}(\mathbb H)
 \subsetneq \mathrm{BMOA}(\mathbb H)\cap B_0(\mathbb H),
 \qquad
 \mathrm{SS}(\mathbb R)
 \subsetneq \mathrm{SQS}(\mathbb R)\cap\mathrm{S}(\mathbb R).
\]
Finally, by a localized lacunary construction, we produce a compactly
supported locally absolutely continuous function $f$ with
$f'\in\mathrm{BMO}(\mathbb R)\setminus\mathrm{VMO}(\mathbb R)$ whose
graph is an asymptotically conformal chord-arc curve that is not
asymptotically smooth.
\end{abstract}

\maketitle

\section{Introduction}\label{intro}

Let $\mathbb H=\{z\in\mathbb{C}:\operatorname{Im}z>0\}$ and $\mathbb H^*=\mathbb{C}\setminus \overline{\mathbb{H}}$ denote the upper and lower half-planes, respectively.
Let $\Gamma\subset\mathbb{C}$ be a Jordan curve passing through $\infty$, and let $\Omega$ and $\Omega^*$ denote the two complementary components of $\Gamma$.
By the Riemann mapping theorem,
there exist conformal mappings
$F\colon\mathbb H\to\Omega$ and
$G\colon\mathbb H^*\to\Omega^*$
with
\[
F(\infty)=\lim_{z \to \infty}F(z)=\infty, \qquad G(\infty)=\lim_{z \to \infty}G(z)=\infty.
\]
By the Carath\'eodory theorem, their boundary extensions determine a sense-preserving homeomorphism
\[
 h=F^{-1}\circ G|_{\mathbb R}\colon\mathbb R\to\mathbb R,
\]
called the \emph{conformal welding} associated with $\Gamma$.

A classical theorem of Beurling and Ahlfors \cite{BA} implies that
$\Gamma$ is a quasicircle if and only if its conformal welding is
quasisymmetric. Recall that an increasing homeomorphism
$h\colon\mathbb R\to\mathbb R$ is \emph{quasisymmetric} if there exists $M\ge1$
such that
\[
 \frac1M
 \le
 \frac{h(x+t)-h(x)}{h(x)-h(x-t)}
 \le M
\]
for all $x\in\mathbb R$ and $t>0$. The class of such mappings is
denoted by $\mathrm{QS}(\mathbb R)$.

Several finer subclasses of $\mathrm{QS}(\mathbb R)$ reflect increasingly
strong regularity of the corresponding conformal maps and boundary
curves. A quasisymmetric homeomorphism is called \emph{symmetric} if
\[
 \frac{h(x+t)-h(x)}{h(x)-h(x-t)}
 \longrightarrow 1
 \qquad (t\to0)
\]
uniformly in $x\in\mathbb R$, and the corresponding class is denoted by
$\mathrm{S}(\mathbb R)$. Symmetric homeomorphisms were studied by
Gardiner and Sullivan \cite{GS} in connection with the little universal
Teichm\"uller space. More recently, Brania and Yang \cite{BY}
formulated the related notion of asymptotically symmetric embeddings
between metric spaces, in which asymptotic symmetry is described
through distortion at arbitrarily small scales.

A homeomorphism $h\in\mathrm{QS}(\mathbb R)$ is \emph{strongly quasisymmetric}
if it is locally absolutely continuous and $h'$ is a Muckenhoupt
$A_\infty$-weight. In particular,
$\log h'\in\mathrm{BMO}(\mathbb R)$.
The corresponding class is denoted by $\mathrm{SQS}(\mathbb R)$.
If, moreover,
$\log h'\in\mathrm{VMO}(\mathbb R)$,
then $h$ is called \emph{strongly symmetric}, and this class is denoted by
$\mathrm{SS}(\mathbb R)$. Thus
\[
 \mathrm{SS}(\mathbb R)
 \subset \mathrm{SQS}(\mathbb R)\cap\mathrm{S}(\mathbb R).
\]
These classes can be viewed as distinguished subspaces of the universal
Teichm\"uller space, which may be identified with
the quotient of $\mathrm{QS}(\mathbb{R})$ by the affine group
\[
\mathrm{Aff}(\mathbb{R})
=\{x\mapsto ax+b:\,a>0,\ b\in\mathbb{R}\}
\]
(see, for example, \cite{Le}). They are closely related to the classical analytic
function spaces $\mathrm{BMOA}$, $\mathrm{VMOA}$, the Bloch space, and the little Bloch space (for precise definitions, see Section~\ref{sec:functions_embeddings_and_curves}).

Among Jordan curves passing through $\infty$, graphs of continuous real-valued
functions form a particularly concrete class in which these analytic
and geometric conditions can be compared directly. In \cite{MT2}
we studied this comparison by means of graph welding, and several results of \cite{MT2}
are used below. The purpose of the present paper is to investigate the relation
between a function $f\colon\mathbb R\to\mathbb R$, the geometry of its graph,
and the boundary behavior of the corresponding conformal mappings from
two complementary directions. The first concerns the $\mathrm{BMOA}$ regularity
of conformal maps onto arbitrary graph domains. The second concerns
the construction of graph curves whose asymptotic geometry separates
several natural vanishing regularity conditions.

Let $F\colon\mathbb H\to\Omega$ be conformal, with $F(\infty)=\infty$.
We say that $\Omega$ is a \emph{$\mathrm{BMO}$ domain} if
$\log F'\in\mathrm{BMOA}(\mathbb H)$.
For bounded Jordan domains the definition is analogous, with $\mathbb H$ replaced by the unit disk $\mathbb{D}$.
Zinsmeister \cite{Zin} proved that every Jordan domain with
Ahlfors regular boundary is a $\mathrm{BMO}$ domain.
The converse is false
(see \cite[Exercise 17, p.~267]{GM}). Bishop and Jones
\cite[Theorem 4]{BJ} subsequently gave geometric characterizations of
$\mathrm{BMO}$ domains in terms of chord-arc subdomains and harmonic measures.

Graph domains need not have Ahlfors regular boundaries, and the
defining function is not assumed to satisfy any quantitative regularity.
Nevertheless, their geometry alone forces the $\mathrm{BMOA}$ property. Our first
result is that every graph domain is a $\mathrm{BMO}$ domain.
This fact was stated without proof by Astala and Zinsmeister
\cite[p.~617]{AZ}. Since we were unable to locate a proof in the
literature, we give one in Section~\ref{bmo_domain}. The argument is based on a
variant of the McMillan twist theorem. More precisely, for a conformal
mapping $F\colon\mathbb H\to\Omega$ onto a graph domain we prove that every
continuous branch of $\arg F'$ is bounded. The imaginary part of $\log F'$ is
then a bounded harmonic function, and the Carleson measure characterization of
$\mathrm{BMOA}$ yields $\log F'\in\mathrm{BMOA}(\mathbb H)$.
Thus no smoothness, rectifiability, or quantitative geometric
assumption on the graph is required.

We next turn to the small-scale geometry of graphs of Zygmund
functions. If $f$ belongs to the Zygmund class, the mapping
\[
F\colon x\mapsto x+if(x)
\]
is a quasisymmetric embedding of $\mathbb R$ into $\mathbb C$ \cite[Theorem 1.4 and Remark 3.1]{KO}, so that its graph
$\Gamma=F(\mathbb R)$ is a quasicircle. The little Zygmund condition
leads to asymptotic symmetry of $F$ and hence to asymptotic conformality of
the graph. Lacunary trigonometric series are a natural source
of explicit examples, since their second differences can be small while
their first-order behavior remains highly oscillatory.

In Section~\ref{Weierstrass_Zygmund} we consider the Weierstrass-type function
\[
 f(x)
 =
 \sum_{n=1}^{\infty}
 \frac{1}{2^n\sqrt n}\cos(2^n x).
\]
We prove that $f$ belongs to the little Zygmund class, so that its graph
is an asymptotically conformal quasicircle. At the same time, the
classical differentiability properties of lacunary series imply that the
graph is not locally rectifiable and hence cannot be asymptotically
smooth. Since every graph domain is a $\mathrm{BMO}$ domain by our first result,
this produces an explicit asymptotically conformal
Bishop--Jones quasicircle that is not asymptotically smooth.

Through the boundary characterizations of the little Bloch and $\mathrm{VMOA}$
classes on the half-plane obtained in \cite{MT1}, this example
also gives
\[
 \mathrm{VMOA}(\mathbb H)
 \subsetneq
 \mathrm{BMOA}(\mathbb H)\cap B_0(\mathbb H).
\]
In the corresponding Teichm\"uller-theoretic formulation, it gives
\[
 \mathrm{SS}(\mathbb R)
 \subsetneq
 \mathrm{SQS}(\mathbb R)\cap\mathrm{S}(\mathbb R).
\]
Thus asymptotic smoothness can fail even when both
$\mathrm{BMO}$-type regularity and asymptotic conformality are present. This is
a graph analogue, in the nonrectifiable setting, of the phenomenon studied in our recent work
\cite{MT0}, where an asymptotically conformal chord-arc curve that
is not asymptotically smooth was obtained by an iterative geometric
construction. The example also gives a negative answer to the question raised in \cite{MT0}
of whether every asymptotically conformal Bishop--Jones quasicircle is necessarily
a chord-arc curve, since the graph of $f$ is not even locally rectifiable.

A further aim of the present paper is to realize this phenomenon by a
much more regular graph function, which is carried out in Section~\ref{sec:construction_of_the_lacunary_series}.
We construct a compactly supported, locally absolutely continuous function
$f\colon\mathbb R\to\mathbb R$ in the little Zygmund class such that
\[
 f'\in\mathrm{BMO}(\mathbb R)\setminus\mathrm{VMO}(\mathbb R).
\]
The construction is based on suitably
localized lacunary trigonometric sums. Roughly speaking, the
coefficients of the individual blocks tend to zero, which makes the
Zygmund oscillation vanish at small scales, while the number of frequencies in each block
grows fast enough to keep the mean oscillation of
the derivative bounded away from zero on a sequence of shrinking
intervals. Since $f'\in\mathrm{BMO}(\mathbb R)$, the graph of $f$ is a
chord-arc curve, and its little Zygmund regularity makes it
asymptotically conformal. Nevertheless, the oscillation of $f'$ on
the distinguished shrinking intervals keeps the ratio of arc length to chord
length bounded away from $1$, so the graph is not asymptotically smooth.
In particular, the obstruction to asymptotic smoothness in this
example is not caused by nonrectifiability, since it persists within the
class of chord-arc graphs.

The paper is organized as follows. Section~\ref{sec:functions_embeddings_and_curves} collects the necessary
background on $\mathrm{BMO}$, $\mathrm{VMO}$, Bloch and $\mathrm{BMOA}$ spaces, Zygmund classes,
quasisymmetric and asymptotically symmetric embeddings, and the
corresponding classes of curves. It also contains a proof that the little Zygmund
class coincides with the strongly little Zygmund class. In Section~\ref{bmo_domain} we prove that every
graph domain is a $\mathrm{BMO}$ domain, together with the conformal distortion
estimates needed for the proof. Section~\ref{Weierstrass_Zygmund} studies graphs of Zygmund
functions and the Weierstrass-type example, which leads to the strict inclusions
above. Section~\ref{sec:construction_of_the_lacunary_series}
develops the localized lacunary construction.

\section{Functions, embeddings, and curves}\label{sec:functions_embeddings_and_curves}

\subsection{Function spaces}
A locally integrable function $u \in L^1_{\mathrm{loc}}(\mathbb{R})$ is said to be of \emph{bounded mean oscillation} if
\[
\|u\|_{\mathrm{BMO}(\mathbb{R})}\coloneqq\sup_{I \subset \mathbb{R}}\frac{1}{|I|}\int_I |u(x) - u_I|\,dx< \infty,
\]
where the supremum is taken over all bounded intervals $I \subset \mathbb{R}$,
and $u_I$ denotes the integral mean of $u$ over $I$. The collection of all such functions is denoted by $\mathrm{BMO}(\mathbb{R})$, which becomes a Banach space after identifying functions that differ by a constant. A function $u \in \mathrm{BMO}(\mathbb{R})$ is said to be of \emph{vanishing mean oscillation} if
\[
\lim_{\delta \to 0}\,\sup_{|I|\le\delta}\frac{1}{|I|}\int_I |u(x) - u_I|\,dx= 0.
\]
The set of all such functions is denoted by $\mathrm{VMO}(\mathbb{R})$.

A holomorphic function $\Phi$ on $\mathbb H$ is called a \emph{Bloch function} if
\[
\sup_{z \in \mathbb H}(\operatorname{Im} z)|\Phi'(z)| <\infty.
\]
The Banach space of all Bloch functions (modulo additive constants) is denoted by $B(\mathbb H)$. If $\Phi \in B(\mathbb H)$ satisfies
\[
\lim_{y \to 0}\,\sup_{x\in\mathbb R}\, y\,|\Phi'(x+iy)| =0,
\]
then $\Phi$ is called a \emph{little Bloch function}. The collection of all such functions forms a closed subspace of $B(\mathbb H)$, denoted by $B_0(\mathbb H)$.

A holomorphic function $\Phi$ on $\mathbb{H}$ belongs to $\mathrm{BMOA}(\mathbb{H})$ if the measure $|\Phi'(z)|^{2}\, y\, dx\, dy$ ($z=x+iy$) is a Carleson measure on $\mathbb{H}$, and it belongs to $\mathrm{VMOA}(\mathbb{H})$ if this measure is a vanishing Carleson measure. Equivalently, $\Phi\in\mathrm{BMOA}(\mathbb{H})$ if and only if $\Phi$ has non-tangential boundary values $\Phi^*$ almost everywhere on $\mathbb R$, $\Phi^*\in\mathrm{BMO}(\mathbb R)$, and $\Phi$ is the Poisson integral of $\Phi^*$. In this case $\Phi\in\mathrm{VMOA}(\mathbb{H})$ if and only if $\Phi^*\in\mathrm{VMO}(\mathbb R)$ (see \cite[Chapter VI]{Ga}).

\subsection{Zygmund classes}
For a function $u\colon\mathbb R\to\mathbb R$ we write the second difference as
\[
  \Delta_h^2u(x)\coloneqq u(x+h)+u(x-h)-2u(x).
\]
A continuous function $f\colon \mathbb{R}\to\mathbb{R}$ is said to belong to the \emph{Zygmund class}
$\Lambda_*(\mathbb{R})$ if
\[
\|f\|_{\Lambda_*}
\coloneqq
\sup_{x\in\mathbb{R},\,h\neq 0}
\frac{|\Delta_h^2f(x)|}{|h|}
<\infty .
\]
The quantity $\|f\|_{\Lambda_*}$ is called the \emph{Zygmund seminorm} of $f$. A function $f \in \Lambda_*(\mathbb{R})$
is said to belong to the \emph{little Zygmund class} $\lambda_*(\mathbb{R})$
if
\[
\lim_{h \to 0}\sup_{x \in \mathbb{R}}\frac{|\Delta_h^2f(x)|}{|h|}= 0.
\]

For $\lambda,\mu>0$ with $\lambda+\mu=2$ and $h\ne0$, set
\[
E_f^{\lambda,\mu}(x,h)\coloneqq\mu\bigl(f(x+\lambda h)-f(x)\bigr)-\lambda\bigl(f(x)-f(x-\mu h)\bigr).
\]
For $\lambda=\mu=1$ this is $\Delta_h^2 f(x)$. A function $f\in\Lambda_*(\mathbb R)$ belongs to the
\emph{strongly little Zygmund class} $\lambda^{S}_*(\mathbb{R})$ if
\[
\lim_{h \to 0}\,\sup_{x \in \mathbb{R}}\,\sup_{\substack{\lambda,\mu>0\\ \lambda+\mu=2}}\frac{|E_f^{\lambda,\mu}(x,h)|}{|h|}=0 .
\]
It is shown in \cite[Lemma 5.1]{MT2} that if $f\in \lambda^{S}_*(\mathbb{R})$, then $F(x)=x+if(x)$ is an asymptotically symmetric homeomorphism of $\mathbb R$ onto
the graph $\Gamma=F(\mathbb R)$ of $f$. The following estimate for the deviation of a function from its chords
is used repeatedly below. In particular, it shows that the two little Zygmund conditions coincide.

For a continuous function $g\colon\mathbb R\to\mathbb R$ and $r>0$, put
\[
\omega_g(r)
\coloneqq
\sup_{\substack{x\in\mathbb{R}\\0<|t|\le r}}
\frac{|\Delta_t^2 g(x)|}{|t|}\in[0,\infty].
\]
Then $\omega_g$ is nondecreasing, $\omega_g(r)\le\|g\|_{\Lambda_*}$ if $g\in \Lambda_*(\mathbb R)$, and $g\in\lambda_*(\mathbb R)$ if and only if $g\in\Lambda_*(\mathbb R)$ and $\omega_g(r)\to0$ as $r\to0$.

\begin{proposition}\label{little-strongly}
Let $g\colon\mathbb R\to\mathbb R$ be continuous, let $p<q$, and let $L$ be the affine function with
$L(p)=g(p)$ and $L(q)=g(q)$. Then
\[
|g-L|\le \frac{q-p}{2}\,\omega_g\Bigl(\frac{q-p}{2}\Bigr)\quad\text{on }[p,q].
\]
Consequently,
$|E_g^{\lambda,\mu}(x,h)|\le 2|h|\,\omega_g(|h|)$
for all $x\in\mathbb R$, $h\ne0$, and $\lambda,\mu>0$ with $\lambda+\mu=2$, and
$\lambda_*(\mathbb R)=\lambda_*^S(\mathbb R)$.
\end{proposition}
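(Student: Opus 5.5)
The plan is to reduce everything to a maximum-principle argument for the function $\varphi\coloneqq g-L$ on $[p,q]$. Since $L$ is affine, $\Delta_t^2L\equiv0$, so $\Delta_t^2\varphi=\Delta_t^2g$ wherever defined. Moreover $\varphi$ is continuous with $\varphi(p)=\varphi(q)=0$. If $\omega_g\bigl(\frac{q-p}{2}\bigr)=\infty$ there is nothing to prove, so I assume it is finite.

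\emph{The chord estimate.} Let $M=\max_{[p,q]}\varphi$, attained at some $x_0\in[p,q]$. Suppose $M>0$; the case of the minimum is symmetric, applied to $-\varphi$. Since $M>0$, $x_0$ is an interior point. Put $t=\min(x_0-p,\,q-x_0)\in\bigl(0,\frac{q-p}{2}\bigr]$, so that $x_0\pm t\in[p,q]$ and one of these two points is an endpoint of $[p,q]$. The identity
\[
2\varphi(x_0)=\varphi(x_0+t)+\varphi(x_0-t)-\Delta_t^2\varphi(x_0)
\]
gives
\[
2M\le 0+M+t\,\omega_g(t),
\]
because $\varphi$ vanishes at the endpoint, is at most $M$ at the other point, and $|\Delta_t^2 g(x_0)|\le t\,\omega_g(t)$. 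Hence $M\le t\,\omega_g(t)\le \frac{q-p}{2}\,\omega_g\bigl(\frac{q-p}{2}\bigr)$ by monotonicity of $\omega_g$. Applying the same argument to $-\varphi$ bounds $-\min\varphi$, which gives the claimed bound on $|g-L|$.

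\emph{The bound on $E_g^{\lambda,\mu}$.} Take $h>0$ first and set $p=x-\mu h$, $q=x+\lambda h$, so that $q-p=2h$. Since $\lambda+\mu=2$, we have $x=\frac{\lambda p+\mu q}{2}$ as a convex combination, so $L(x)=\frac{\lambda g(p)+\mu g(q)}{2}$. Expanding the definition gives
\[
E_g^{\lambda,\mu}(x,h)=\mu g(q)+\lambda g(p)-2g(x)=2\bigl(L(x)-g(x)\bigr).
\]
The first part then yields $|E_g^{\lambda,\mu}(x,h)|\le 2h\,\omega_g(h)$. For $h<0$ the same computation works with the roles of the endpoints interchanged ($p=x+\lambda h$, $q=x-\mu h$, $q-p=2|h|$), giving $2|h|\,\omega_g(|h|)$.

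\emph{Equality of the two classes.} The inclusion $\lambda_*^S(\mathbb R)\subset\lambda_*(\mathbb R)$ follows by taking $\lambda=\mu=1$. Conversely, for $g\in\lambda_*(\mathbb R)$ we have $\omega_g(r)\to0$ as $r\to0$, and the bound $2|h|\,\omega_g(|h|)$ is uniform in $x,\lambda,\mu$, so $g\in\lambda_*^S(\mathbb R)$.

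The only delicate point is the choice $t=\min(x_0-p,q-x_0)$ at an extremum. This makes one reflected point an endpoint, so only one unknown value of $\varphi$ enters and the inequality closes. The rest is bookkeeping.
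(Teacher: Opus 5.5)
Your proof is correct, and for the chord estimate it takes a genuinely different route from the paper.

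The paper builds the dyadic piecewise affine interpolants $L_k$ of $g$ at the points $p+2^{-k}j(q-p)$. It bounds each correction $\|L_{k+1}-L_k\|_{L^\infty([p,q])}$ by $2^{-k-1}h\,\omega_g(2^{-k}h)$, with $h=(q-p)/2$, using a single midpoint second difference. It then sums the telescoping series, which requires uniform continuity of $g$ to get $L_k\to g$.

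You instead work at an extremal point of $g-L$. Choosing $t$ as the distance to the nearer endpoint makes one reflected point a zero of $g-L$, so one second difference at one scale closes the inequality $2M\le M+t\,\omega_g(t)$.

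Your argument is shorter and uses only compactness (attainment of the maximum) rather than a limiting procedure. The paper's scheme, before the crude step $\omega_g(2^{-k}h)\le\omega_g(h)$, records the contribution of each dyadic scale separately, $\sum_{k\ge0}2^{-k-1}h\,\omega_g(2^{-k}h)$, which is the kind of multiscale bound that adapts to finer moduli of continuity.

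Your derivation of $E_g^{\lambda,\mu}(x,h)=2\bigl(L(x)-g(x)\bigr)$ and of $\lambda_*(\mathbb R)=\lambda_*^S(\mathbb R)$ matches the paper's. The only difference is that you treat $h<0$ directly by swapping the roles of the endpoints, whereas the paper reduces to $h>0$ via the symmetry $E_g^{\lambda,\mu}(x,-h)=E_g^{\mu,\lambda}(x,h)$.
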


\begin{proof}
Put $h=(q-p)/2$. We may assume $\omega_g(h)<\infty$.
For each integer $k\ge0$, let
$x_{k,j}=p+2^{-k}j(q-p)$, $j=0,1,\ldots,2^k$,
and let $L_k$ be the continuous piecewise affine function with
$L_k(x_{k,j})=g(x_{k,j})$ for all $j$, so that $L_0=L$. On each interval
$[x_{k,j},x_{k,j+1}]$ the functions $L_k$ and $L_{k+1}$ agree at the endpoints, and
the only new interpolation point of $L_{k+1}$ is the midpoint $m_{k,j}$. Since
$x_{k,j}=m_{k,j}-2^{-k}h$ and $x_{k,j+1}=m_{k,j}+2^{-k}h$,
\[
|L_{k+1}(m_{k,j})-L_k(m_{k,j})|
=
\frac12\,\bigl|\Delta^2_{2^{-k}h}\,g(m_{k,j})\bigr|
\le
\frac{h}{2^{k+1}}\,\omega_g\!\left(\frac{h}{2^k}\right)
\le
\frac{h}{2^{k+1}}\,\omega_g(h).
\]
The function $L_{k+1}-L_k$ is affine on each half of $[x_{k,j},x_{k,j+1}]$
and vanishes at the endpoints of this interval, so its maximum
absolute value there is attained at $m_{k,j}$. Hence
\[
\|L_{k+1}-L_k\|_{L^\infty([p,q])}
\le
\frac{h}{2^{k+1}}\,\omega_g(h).
\]
Since $g$ is uniformly continuous on $[p,q]$ and the mesh of the partitions tends to zero,
$L_k\to g$ uniformly on $[p,q]$. Summing over $k$ gives
\[
\|g-L\|_{L^\infty([p,q])}
\le
\sum_{k=0}^{\infty}
\|L_{k+1}-L_k\|_{L^\infty([p,q])}
\le
h\,\omega_g(h).
\]

For the second assertion, since
$E_g^{\lambda,\mu}(x,-h)=E_g^{\mu,\lambda}(x,h)$,
it is enough to consider $h>0$. Fix $x\in\mathbb{R}$,
and put $p=x-\mu h$ and $q=x+\lambda h$, so that $q-p=2h$.
Since $x-p=\mu h$ and $q-x=\lambda h$, the affine function $L$ interpolating $g$ at $p$ and $q$ satisfies
\[
L(x)
=
\frac{q-x}{q-p}\,g(p)
+
\frac{x-p}{q-p}\,g(q)
=
\frac{\lambda}{2}\,g(p)+\frac{\mu}{2}\,g(q),
\]
and consequently
\[
E_g^{\lambda,\mu}(x,h)
=
\mu g(q)+\lambda g(p)-2g(x)
=
2\bigl(L(x)-g(x)\bigr).
\]
The first assertion now gives $|E_g^{\lambda,\mu}(x,h)|\le 2h\,\omega_g(h)$.
Finally, $E_g^{1,1}(x,h)=\Delta_h^2g(x)$ gives the inclusion $\lambda_*^S(\mathbb R)\subset\lambda_*(\mathbb R)$, while the
second assertion gives the reverse inclusion.
\end{proof}

\subsection{Embeddings and curves}
A \emph{quasisymmetric embedding} is an embedding $g\colon X \to Y$ between metric spaces for which there exists an increasing homeomorphism $\eta\colon [0,\infty) \to [0,\infty)$ such that
\begin{equation}\label{eq:qs}
\frac{|x-a|}{|x-b|} \leq t \implies \frac{|g(x)-g(a)|}{|g(x)-g(b)|} \leq \eta(t)
\end{equation}
for all distinct points $x,a,b \in X$ and all $t>0$.
Here $|x-y|$ stands for the distance between $x$ and $y$.
When $g$ is surjective, we call it a \emph{quasisymmetric homeomorphism}.
If $g$ satisfies \eqref{eq:qs} only for $t=1$, then $g$ is called \emph{weakly quasisymmetric}.
(Weak) quasisymmetry on $\mathbb R$ was introduced by Beurling and Ahlfors \cite{BA}
in their study of the boundary correspondence of quasiconformal self-maps
of $\mathbb H$, and
Tukia and V\"ais\"al\"a \cite{TV}
formulated the general definition in the setting of metric spaces.

When $X$ is $\mathbb R$ or $\mathbb S$ and $Y=\mathbb C$, weak quasisymmetry implies quasisymmetry
\cite{TV}.
If $g\colon \mathbb{R}\to\mathbb{C}$ is a quasisymmetric embedding, then $g(\mathbb{R})$ is a quasicircle, that is,
the image of the real axis under a quasiconformal self-map of the plane.

A quasisymmetric embedding (or homeomorphism)
$g\colon X \to Y$ between metric spaces is called \emph{asymptotically symmetric} if for all $\varepsilon > 0$ and $t > 0$ there exists $\delta > 0$ such that for all distinct points $x, a, b \in X$ contained in a ball of radius $\delta$,
\begin{equation}\label{eq:AS}
\frac{|x - a|}{|x - b|} \leq t \implies \frac{|g(x) - g(a)|}{|g(x) - g(b)|} \leq (1 + \varepsilon) t .
\end{equation}
This notion was introduced by
Brania and Yang \cite{BY}. In their paper an asymptotically symmetric embedding is not assumed to be quasisymmetric, whereas throughout this paper we require it to be quasisymmetric. If \eqref{eq:AS} is required only for $t=1$, the embedding is called \emph{weakly asymptotically symmetric}.

Let $\Gamma$ be a Jordan curve passing through $\infty$. For distinct points $a, b \in \Gamma$, the bounded subarc of $\Gamma$ between $a$ and $b$ is denoted by $\wideparen{ab}$, and its length by $\ell(\wideparen{ab})$. A quasicircle $\Gamma$ is said to be \emph{asymptotically conformal} if
\[
\lim_{r\to0}\ \sup_{0<|a-b|\le r}\ \max_{w \in \wideparen{ab}} \frac{|a-w|+|w-b|}{|a-b|}=1.
\]
A locally rectifiable curve $\Gamma$ is called \emph{chord-arc} if there exists $C \geq 1$ such that
$\ell(\wideparen{ab})\le C|a-b|$ for all $a,b \in \Gamma$, and \emph{asymptotically smooth} if
\[
\lim_{r\to0}\ \sup_{0<|a-b|\le r}\frac{\ell(\wideparen{ab})}{|a-b|}=1.
\]

\subsection{Graph domains}
Let $f\colon\mathbb R\to\mathbb R$ be continuous and let $\Gamma=\{x+if(x):x\in\mathbb R\}$ be its graph.
Then $\Gamma\cup\{\infty\}$ is a Jordan curve in the Riemann sphere, and the two complementary components
\[
\Omega^+=\{x+iy:y>f(x)\},\qquad \Omega^-=\{x+iy:y<f(x)\}
\]
are called \emph{graph domains}. If $F\colon\mathbb H\to\Omega^{\pm}$ is conformal with $F(\infty)=\infty$, then by the Carath\'eodory theorem
$F$ extends to a homeomorphism of $\overline{\mathbb H}$ onto $\overline{\Omega^{\pm}}$ that maps $\mathbb R$ onto $\Gamma$.
We use the same letter $F$ for this extension.

\section{$\mathrm{BMO}$ domains}\label{bmo_domain}

In this section we prove that every graph domain is a $\mathrm{BMO}$ domain.
We show that for a conformal map $F\colon\mathbb H \to \Omega$ onto a graph domain with $F(\infty)=\infty$,
every continuous branch of $\arg F'$ is bounded on $\mathbb H$. This strategy is suggested as an exercise in \cite[p.~172]{Pom}.
The proof follows that of the McMillan twist theorem (see \cite[Theorem 6.18]{Pom}). Since $\partial \Omega$
is a graph, the conformal map $F$ cannot twist near the boundary.

We use the hyperbolic metrics of curvature $-1$ on $\mathbb H$ and on the unit disk $\mathbb D=\{\xi\in\mathbb C\colon |\xi|<1\}$, whose length elements are $|d\xi|/\operatorname{Im}\xi$ and $2|d\xi|/(1-|\xi|^2)$, respectively. The corresponding distances are
\[
d_{\mathbb H}(z,w)=\operatorname{arccosh}
\!\left(
1+
\frac{|z-w|^2}{2\,\operatorname{Im}z\,\operatorname{Im}w}
\right),
\qquad
d_{\mathbb D}(\xi,\eta)=\log\frac{1+|\xi-\eta|/|1-\bar{\xi}\eta|}{1-|\xi-\eta|/|1-\bar{\xi}\eta|}.
\]
In particular, $d_{\mathbb D}(0,\xi)=\log\frac{1+|\xi|}{1-|\xi|}$ and $\tanh d_{\mathbb D}(0,\xi)=\frac{2|\xi|}{1+|\xi|^2}$.

Throughout this section we use the following maps. For $a\in\mathbb H$, let
\[
T_a(w)=\frac{w-\operatorname{Re}a}{\operatorname{Im}a},
\]
which is an affine automorphism of $\mathbb H$ with $T_a(a)=i$, and let
\[
\kappa(\xi)=i\,\frac{1+\xi}{1-\xi},\qquad \kappa^{-1}(w)=\frac{w-i}{w+i},
\]
so that the Cayley map $\kappa$ sends $\mathbb D$ conformally onto $\mathbb H$ with $\kappa(0)=i$ and $\kappa'(0)=2i$.
Both $T_a$ and $\kappa$ are isometries for the hyperbolic distances above.
For a conformal map $F$ on $\mathbb H$ we write $d_F(z)=\operatorname{dist}(F(z),\partial F(\mathbb H))$, and for $a\in\mathbb H$ we put
\[
\Theta_a=F\circ T_a^{-1}\circ\kappa ,
\]
which is univalent on $\mathbb D$ with $\Theta_a(0)=F(a)$ and $\Theta_a'(0)=2i(\operatorname{Im}a)F'(a)$.

We prepare three propositions, which are distortion theorems of different types for conformal mappings.
The first one is a consequence of the Koebe distortion theorem and appears as an exercise in \cite[p.~13]{Pom}.
We include a proof for completeness.

\begin{proposition}\label{fz-f0/f':label}
If $F$ maps the unit disk $\mathbb{D}$ conformally into $\mathbb{C}$, then
\[
\left| \log \frac{F(z) - F(0)}{z F'(0)} \right| \leq 4 d_{\mathbb{D}}(0,z)
\]
for all $z \in \mathbb{D}$, where the logarithm is the holomorphic branch on $\mathbb D$ that vanishes at $z=0$, the quotient being interpreted as $1$ at $z=0$.
\end{proposition}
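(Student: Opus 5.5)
Normalize first: put $g=(F-F(0))/F'(0)$, so that $g\in S$ and the quantity to bound is $\log(g(z)/z)$. We estimate the real and imaginary parts separately and compare both with $d_{\mathbb D}(0,z)=\log\frac{1+r}{1-r}$, where $r=|z|$.

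\emph{Real part.} The Koebe growth theorem gives
\[
\frac{r}{(1+r)^2}\le |g(z)|\le\frac{r}{(1-r)^2}.
\]
Hence
\[
\Bigl|\log\frac{|g(z)|}{|z|}\Bigr|\le 2\log\frac{1}{1-r}\le 2\log\frac{1+r}{1-r}=2d_{\mathbb D}(0,z),
\]
since $-2\log(1+r)\ge -2\log\frac{1}{1-r}$.

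\emph{Imaginary part.} Consider $\phi(t)=\arg\bigl(g(tz)/(tz)\bigr)$ for $t\in[0,1]$, with $\phi(0)=0$. Differentiating,
\[
\phi'(t)=\operatorname{Im}\Bigl(\frac{z\,g'(tz)}{g(tz)}-\frac1t\Bigr)=\frac1t\operatorname{Im}\frac{\zeta g'(\zeta)}{g(\zeta)},\qquad \zeta=tz.
\]
We use the standard bound
\[
\Bigl|\log\frac{\zeta g'(\zeta)}{g(\zeta)}\Bigr|\le\log\frac{1+|\zeta|}{1-|\zeta|},
\]
which follows from the subordination of $\zeta g'/g$. Alternatively, one can directly use the classical rotation estimate $|\arg(g(\zeta)/\zeta)|\le\log\frac{1+|\zeta|}{1-|\zeta|}$ from \cite{Pom}, which avoids the need to justify the intermediate bound.

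If we instead integrate the first bound, then, writing $w=\log(\zeta g'/g)$ with $|w|\le s:=\log\frac{1+|\zeta|}{1-|\zeta|}$, we get
\[
|\operatorname{Im} e^w|\le e^{s}\sin(\min(s,\pi/2))\le s\,e^s.
\]
With $e^s=\frac{1+\rho}{1-\rho}$ and $\rho=t r$, this gives
\[
|\phi(1)|\le\int_0^1\frac1t\cdot\frac{1+tr}{1-tr}\log\frac{1+tr}{1-tr}\,dt.
\]
Since $\log\frac{1+\rho}{1-\rho}\le\frac{2\rho}{1-\rho}$, the integrand is bounded by a multiple of $\frac{r}{(1-tr)^2}$. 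The integral is then of order $\frac{r}{1-r}$, which is not controlled by $\log\frac{1}{1-r}$ near $r=1$. So the crude route fails, and the sharp rotation bound is needed.

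\emph{Main obstacle and its resolution.} The real difficulty is the argument estimate. The clean way is Bieberbach's inequality $|a_2|\le2$ applied to Koebe transforms, which gives
\[
\Bigl|\frac{\zeta g''(\zeta)}{g'(\zeta)}-\frac{2|\zeta|^2}{1-|\zeta|^2}\Bigr|\le\frac{4|\zeta|}{1-|\zeta|^2}.
\]
Integrating this along the ray yields $|\arg g'(z)|\le 2\log\frac{1+r}{1-r}$. For $\log(g(z)/z)$ itself, apply the Grunsky-type inequality
\[
\Bigl|\log\frac{g(z)}{z}\Bigr|\le\log\frac{1+r}{1-r}+\log\frac1{1-r^2}
\]
from the Koebe theory in \cite{Pom}; more simply, one can just take the known estimate $|\arg(g(z)/z)|\le\log\frac{1+r}{1-r}$.

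\emph{Conclusion.} Adding the two parts,
\[
\Bigl|\log\frac{g(z)}{z}\Bigr|\le\Bigl|\log\frac{|g(z)|}{|z|}\Bigr|+\Bigl|\arg\frac{g(z)}{z}\Bigr|\le 2d_{\mathbb D}(0,z)+d_{\mathbb D}(0,z)\le 4d_{\mathbb D}(0,z),
\]
which is the assertion of Proposition~\ref{fz-f0/f':label}, with room to spare in the constant.
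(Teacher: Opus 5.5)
Your final argument is correct, but its key ingredient differs from the paper's. You split $\log(g(z)/z)$ into real and imaginary parts. The real part is handled by the growth theorem, which is fine and gives $2d_{\mathbb D}(0,z)$. For the imaginary part you take Grunsky's sharp rotation theorem $|\arg(g(z)/z)|\le\log\frac{1+r}{1-r}$ as a black box, which gives the constant $3$. The disk form of Grunsky's theorem that you quote already gives constant $2$, since $\log\frac{1}{1-r^2}\le\log\frac{1+r}{1-r}$.

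The paper does not split and uses only the growth theorem and the Schwarz lemma. Set $\Phi(\zeta)=\zeta g'(\zeta)/g(\zeta)$. Applying the growth theorem to the Koebe transform gives $|\Phi(\zeta)|\le\frac{1+|\zeta|}{1-|\zeta|}$, so $|\Phi-1|\le\frac{2}{1-R}$ on $|\zeta|<R=\frac{1+r}{2}$. Since $\Phi(0)=1$, the Schwarz lemma on that disk gives $|\Phi(z)-1|\le\frac{2r}{R(1-R)}$. For $Q=\log(g(\zeta)/\zeta)$ this means $|Q'(z)|\le\frac{8}{1-|z|^2}$, and integrating along the radius bounds the whole complex logarithm by $4d_{\mathbb D}(0,z)$ at once.

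Your route gets a better constant, but it rests on the Grunsky inequalities or L\"owner theory. These are much deeper than the statement, and you should cite the result precisely as Grunsky's theorem. The paper's route is self-contained, and it is what makes the proposition a consequence of the Koebe distortion theorem.

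Two of your side remarks are wrong and should be dropped.
\begin{itemize}
\item The bound $|\log(\zeta g'/g)|\le\log\frac{1+|\zeta|}{1-|\zeta|}$ does not follow from subordination. The function $\zeta g'/g$ is subordinate to $(1+\zeta)/(1-\zeta)$ only when $g$ is starlike, and this bound is itself a Grunsky-type theorem.
\item The assertion that the crude route fails and the sharp rotation bound is needed is false. The loss comes from your estimate $|\operatorname{Im}e^w|\le se^s$. Instead, $|\operatorname{Im}e^w|=|\operatorname{Im}(e^w-1)|\le|e^w-1|\le e^{s}-1=\frac{2\rho}{1-\rho}$, which gives $|\arg(g(z)/z)|\le\int_0^r\frac{2\,d\rho}{1-\rho}=2\log\frac{1}{1-r}$. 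Moreover, the Schwarz-lemma step above shows that the mere modulus bound on $\Phi$, which the growth theorem supplies, is already enough.
\end{itemize}
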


\begin{proof}
Fix $z\in\mathbb{D}$ and define
\[
\widetilde F(w)=\frac{F\!\left(\frac{z+w}{1+\bar{z}w}\right)-F(z)}{(1-|z|^{2})F'(z)},\quad w\in\mathbb{D}.
\]
Then $\widetilde F$ is univalent on $\mathbb{D}$ with $\widetilde F(0)=0$ and $\widetilde F'(0)=1$, and the growth theorem \cite[Theorem 1.3]{Pom} gives
\[
\frac{|w|}{(1+|w|)^{2}}\le|\widetilde F(w)|\le\frac{|w|}{(1-|w|)^{2}},\quad w\in\mathbb{D}.
\]
Taking $w=-z$ and multiplying by $(1-|z|^{2})/|z|$, we find that
\begin{equation}
\label{eq:key}
\frac{1-|z|}{1+|z|}\le\left|\frac{F(z)-F(0)}{zF'(z)}\right|\le\frac{1+|z|}{1-|z|}
\end{equation}
for every $z\in\mathbb D\setminus\{0\}$.

Let $Q(z)=\log\bigl((F(z)-F(0))/(zF'(0))\bigr)$ with $Q(0)=0$, which is analytic on $\mathbb{D}$, and let
$\Phi(z)=zF'(z)/(F(z)-F(0))$ with $\Phi(0)=1$. Then $\Phi$ is analytic on $\mathbb D$ and
\[
Q'(z)
=\frac{F'(z)}{F(z)-F(0)}-\frac{1}{z}
=
\frac{\Phi(z)-1}{z}.
\]
Fix $z\in\mathbb D\setminus\{0\}$, put $r=|z|$ and $R=(1+r)/2$. For $|\zeta|<R$, the lower bound in \eqref{eq:key} gives
$|\Phi(\zeta)|\le(1+R)/(1-R)$, hence $|\Phi(\zeta)-1|\le 2/(1-R)$.
The function
$\Psi(\zeta)=\frac{1-R}{2}\bigl(\Phi(\zeta)-1\bigr)$
is therefore analytic on $\{|\zeta|<R\}$ with $|\Psi|\le1$ and $\Psi(0)=0$, and the Schwarz lemma on this disk yields
$|\Psi(z)|\leq |z|/R$. Consequently,
\[
|Q'(z)|=\frac{|\Phi(z)-1|}{|z|}
\leq
\frac{2}{R(1-R)}
=
\frac{8}{1-|z|^2}.
\]
For $z=re^{i\theta}$, integrating along the radius $t\mapsto te^{i\theta}$, $0\leq t\leq r$, gives
\[
|Q(z)|
\leq
8\int_0^r \frac{dt}{1-t^2}
=4\log\frac{1+r}{1-r}=4d_{\mathbb D}(0,z),
\]
which completes the proof.
\end{proof}

The next proposition is the half-plane version of the length distortion theorem \cite[Corollary~4.18]{Pom}.
It follows from that result by the conformal invariance of harmonic measure, since the map $\xi\mapsto T_{z_0}^{-1}(\kappa(\xi))$ carries radii of $\mathbb D$ to hyperbolic geodesics in $\mathbb H$ issuing from $z_0$.

\begin{proposition}\label{uhp-f(S):label}
Let $F\colon \mathbb H\to\mathbb C$ be conformal, let $z_0\in\mathbb H$, and let $I\subset\mathbb R$ be a nondegenerate bounded interval.
Then there exists a hyperbolic geodesic ray $S\subset\mathbb H$
joining $z_0$ to some point $\zeta\in I$ such that
\[
\ell(F(S))\le C_1\, d_F(z_0)\exp\!\left(\frac{C_2}{\omega_{\mathbb H}(z_0,I)}\right),
\]
where $\ell$ denotes Euclidean length, $\omega_{\mathbb H}(z_0,I)$ is the harmonic measure of $I$ at $z_0$ with respect to $\mathbb H$, and $C_1,C_2$ are absolute constants.
\end{proposition}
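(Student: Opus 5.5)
The plan is to transplant the statement to the unit disk, apply the length distortion theorem \cite[Corollary~4.18]{Pom} there, and transport the resulting radius back to $\mathbb H$. The tools are the maps $T_{z_0}$ and $\kappa$ together with the conformal invariance of harmonic measure.

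\textbf{Transfer to the disk.} Put $\Theta=\Theta_{z_0}=F\circ T_{z_0}^{-1}\circ\kappa$. This map is univalent on $\mathbb D$ with $\Theta(0)=F(z_0)$ and $\Theta(\mathbb D)=F(\mathbb H)$, so
\[
\operatorname{dist}(\Theta(0),\partial\Theta(\mathbb D))=d_F(z_0).
\]
The interval $I$ corresponds to the boundary arc $A=\kappa^{-1}(T_{z_0}(I))\subset\partial\mathbb D$. Since $T_{z_0}^{-1}\circ\kappa$ is a conformal homeomorphism $\mathbb D\to\mathbb H$ sending $0$ to $z_0$, harmonic measure is preserved:
\[
\omega_{\mathbb H}(z_0,I)=\omega_{\mathbb D}(0,A)=\frac{|A|}{2\pi}.
\]
This uses that harmonic measure at $0$ is normalized arc length. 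The arc $A$ is nondegenerate and avoids $1=\kappa^{-1}(\infty)$ except possibly as an endpoint, because $I$ is bounded.

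\textbf{Disk estimate.} \cite[Corollary~4.18]{Pom}, in its formulation for arcs, says roughly the following. For $g$ univalent on $\mathbb D$ and an arc $A\subset\partial\mathbb D$, there is $\zeta'\in A$ with
\[
\ell\bigl(g([0,\zeta'])\bigr)\le C_1\operatorname{dist}(g(0),\partial g(\mathbb D))\,e^{C_2/|A|}.
\]
If the reference is only stated with $|g'(0)|$ in place of the distance, I will first replace $|g'(0)|$ by $4\operatorname{dist}(g(0),\partial g(\mathbb D))$ using the Koebe one-quarter theorem. If it is stated only in a measure form, that the set of bad $\zeta'$ has small measure, I would derive the existence form by choosing the threshold so that the exceptional set has measure less than $|A|$. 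This is the step where constants must be tracked, and it is the main potential obstacle. Applying the estimate to $g=\Theta$ and absorbing $2\pi$ into $C_2$ gives the desired bound for the radius $[0,\zeta')$.

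\textbf{Return to $\mathbb H$.} Set $S=T_{z_0}^{-1}\kappa([0,\zeta'))$. Radii from $0$ are hyperbolic geodesic rays in $\mathbb D$, and $T_{z_0}^{-1}\circ\kappa$ is a hyperbolic isometry. Hence $S$ is a hyperbolic geodesic ray in $\mathbb H$ from $z_0$ ending at $\zeta=T_{z_0}^{-1}\kappa(\zeta')\in I$. If $\zeta'$ happens to be the endpoint $1$ of $A$, I instead pick a nearby point of the open arc; the measure-form argument leaves this freedom. Finally $F(S)=\Theta([0,\zeta'))$, so the length bound carries over verbatim, which proves the claim.
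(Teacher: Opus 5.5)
Your proposal is correct and matches the paper's argument: transfer to $\Theta_{z_0}=F\circ T_{z_0}^{-1}\circ\kappa$, which preserves $d_F(z_0)$ and harmonic measure and sends radii to hyperbolic geodesic rays from $z_0$, then apply Pommerenke's length distortion result (Corollary~4.18) and map the radius back. Your fallback for the case $\zeta'=1$ is never needed: because $I$ is bounded, $\overline{A}=\kappa^{-1}(T_{z_0}(\overline{I}))$ does not contain $1$ at all.
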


We also use the following form of the Koebe distortion theorem on $\mathbb H$.

\begin{proposition}\label{|Fw-Fz|:label}
Let $F\colon \mathbb H\to\mathbb C$ be conformal. Then
\[
|F(w)-F(z)|\ge \frac{1}{4}\,\tanh d_{\mathbb H}(z,w)\; d_F(z)
\]
for all $z,w\in\mathbb H$.
\end{proposition}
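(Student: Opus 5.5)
The plan is to transfer the problem to the unit disk with the normalized map $\Theta_z=F\circ T_z^{-1}\circ\kappa$ and then apply the Koebe quarter theorem together with the growth theorem. First, since $T_z$ and $\kappa$ are hyperbolic isometries, the point $w$ corresponds to $\xi=\kappa^{-1}(T_z(w))\in\mathbb D$ with $d_{\mathbb D}(0,\xi)=d_{\mathbb H}(z,w)$. By the formula recorded before Proposition~\ref{fz-f0/f':label},
\[
\tanh d_{\mathbb H}(z,w)=\frac{2|\xi|}{1+|\xi|^2}\le 2|\xi| .
\]
So it suffices to prove
\[
|\Theta_z(\xi)-\Theta_z(0)|\ge \tfrac18\cdot 2|\xi|\, d_F(z)=\tfrac14|\xi|\,d_F(z).
\]
In fact a slightly better constant should come out. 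Note that $\Theta_z(0)=F(z)$ and $\Theta_z(\mathbb D)=F(\mathbb H)$, so the distance from $\Theta_z(0)$ to $\partial\Theta_z(\mathbb D)$ is exactly $d_F(z)$.

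Next I relate $d_F(z)$ to $|\Theta_z'(0)|$. The Koebe quarter theorem applied to $\Theta_z$ gives $d_F(z)\ge \frac14|\Theta_z'(0)|$. Schwarz's lemma applied to the inverse map on the disk of radius $d_F(z)$ about $F(z)$ gives the reverse inequality $d_F(z)\le|\Theta_z'(0)|$. Only the latter is needed here.

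The lower bound in the growth theorem, applied to the normalized function $(\Theta_z-\Theta_z(0))/\Theta_z'(0)$, gives
\[
|\Theta_z(\xi)-\Theta_z(0)|\ge |\Theta_z'(0)|\,\frac{|\xi|}{(1+|\xi|)^2}\ge \frac{|\xi|}{4}\,|\Theta_z'(0)|\ge\frac{|\xi|}{4}\,d_F(z).
\]
Combining this with $\tanh d_{\mathbb H}(z,w)\le 2|\xi|$ yields the weaker constant $\frac18$. To recover $\frac14$, I will keep the exact expression and compare $\frac{|\xi|}{(1+|\xi|)^2}$ with $\frac14\cdot\frac{2|\xi|}{1+|\xi|^2}$. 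This comparison reduces to
\[
2(1+r^2)\ge(1+r)^2,
\qquad\text{that is,}\qquad
(1-r)^2\ge0,
\]
where $r=|\xi|$, and this holds. Hence
\[
|F(w)-F(z)|\ge \frac{r}{(1+r)^2}\,d_F(z)\ge\frac14\cdot\frac{2r}{1+r^2}\,d_F(z)=\frac14\tanh d_{\mathbb H}(z,w)\,d_F(z).
\]

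The main point requiring care is the direction of the inequality between $d_F(z)$ and $|\Theta_z'(0)|$. We need $|\Theta_z'(0)|\ge d_F(z)$, which comes from Schwarz's lemma applied to $\Theta_z^{-1}$ restricted to the disk $\{|\zeta-F(z)|<d_F(z)\}\subset F(\mathbb H)$, rather than from the quarter theorem. The rest is the routine check above.
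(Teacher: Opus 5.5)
Your final argument is correct and is exactly the paper's proof. It transfers the problem to $\Theta_z$, applies the growth-theorem lower bound $|\Theta_z(\xi)-\Theta_z(0)|\ge|\Theta_z'(0)|\,r/(1+r)^2$, compares $r/(1+r)^2\ge\frac14\cdot 2r/(1+r^2)$ via $(1-r)^2\ge0$, and bounds $d_F(z)\le|\Theta_z'(0)|$ by Schwarz's lemma. The preliminary reduction through $\tanh d_{\mathbb H}(z,w)\le 2|\xi|$ is unnecessary and, as you note, gives only the constant $\frac18$, so your ``it suffices'' claim should be dropped.
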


\begin{proof}
Fix $z,w\in\mathbb H$. Put $\xi=\kappa^{-1}(T_z(w))$ and $r=|\xi|$, so that $\Theta_z(\xi)=F(w)$ and
$d_{\mathbb D}(0,\xi)=d_{\mathbb H}(z,w)$. The growth theorem \cite[Theorem 1.3]{Pom} gives
\[
|F(w)-F(z)|=|\Theta_z(\xi)-\Theta_z(0)|\ge|\Theta_z'(0)|\,\frac{r}{(1+r)^2}.
\]
Since $2(1+r^2)-(1+r)^2=(1-r)^2\ge0$,
\[
\frac{r}{(1+r)^2}\ge\frac14\cdot\frac{2r}{1+r^2}=\frac14\tanh d_{\mathbb D}(0,\xi)=\frac14\tanh d_{\mathbb H}(z,w).
\]
On the other hand, the Schwarz lemma (see \cite[Corollary~1.4]{Pom}) gives
\[
d_F(z)=\operatorname{dist}\bigl(\Theta_z(0),\partial \Theta_z(\mathbb D)\bigr)\le|\Theta_z'(0)|.
\]
Combining these inequalities proves the proposition.
\end{proof}

We now adapt the proof of the twist theorem to graph domains. The adaptation consists of the following two lemmas.
For a conformal map $F$ on $\mathbb H$ (extended to $\overline{\mathbb H}$ when this is possible), we set
\[
\widetilde \phi(z,\zeta)=
\begin{cases}
\dfrac{F(z)-F(\zeta)}{z-\zeta}, & z\neq\zeta,\\[2mm]
F'(z), & z=\zeta.
\end{cases}
\]

\begin{lemma}\label{arg_def:label}
Let $\Omega=\Omega^+$ be the graph domain above the graph $\Gamma$ of a continuous function, and let
$F\colon \mathbb H\to\Omega$ be conformal with $F(\infty)=\infty$.
Then there exists a continuous branch $\phi$ of $\arg\widetilde\phi$ on $\mathbb H\times\overline{\mathbb H}$ such that
\[
|\phi(z,\zeta)|< \frac{3\pi}{2}, \quad (z,\zeta)\in \mathbb{H}\times \mathbb{R}.
\]
Moreover, $\phi(z,\zeta)=\phi(\zeta,z)$ for all $(z,\zeta)\in\mathbb H\times\mathbb H$, and $\phi(z,z)$ is a continuous branch of $\arg F'(z)$.
\end{lemma}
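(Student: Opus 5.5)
We would first construct the branch on $\mathbb H\times\overline{\mathbb H}$ by a topological argument, and then bound it on $\mathbb H\times\mathbb R$ using the geometry of the graph.

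\emph{Step 1: the branch exists.} The function $\widetilde\phi$ is continuous and nonvanishing on $\mathbb H\times\overline{\mathbb H}$. Continuity at the diagonal holds because $F$ is holomorphic in $\mathbb H$, and on $\mathbb H\times\mathbb R$ because $F$ extends continuously to $\overline{\mathbb H}$. Nonvanishing off the diagonal follows from injectivity of the extension, and on the diagonal from $F'\neq0$. The set $\mathbb H\times\overline{\mathbb H}$ is convex, hence simply connected, so a continuous branch $\phi$ of $\arg\widetilde\phi$ exists. It is unique up to an additive $2\pi k$, which we fix later. Since $\widetilde\phi(z,\zeta)=\widetilde\phi(\zeta,z)$ on $\mathbb H\times\mathbb H$, the function $\phi(z,\zeta)-\phi(\zeta,z)$ is a continuous map into $2\pi\mathbb Z$ on the connected set $\mathbb H\times\mathbb H$. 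It vanishes on the diagonal, so it vanishes identically. The same continuity gives that $\phi(z,z)$ is a continuous branch of $\arg F'(z)$.

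\emph{Step 2: the bound on $\mathbb H\times\mathbb R$.} Take $z\in\mathbb H$ and $\zeta\in\mathbb R$. Then $z-\zeta\in\mathbb H$, so we can choose $\arg(z-\zeta)\in(0,\pi)$. Also $F(\zeta)\in\Gamma$ and $F(z)\in\Omega^+$. The vertical ray from $F(\zeta)$ upward lies in $\Omega^+$, and the vertical ray downward lies in $\Omega^-$. For points $w\in\Omega^+$ we want a continuous $\arg(w-F(\zeta))$ taking values in $(-\pi/2,3\pi/2)$. To get it, note that $\Omega^+\setminus\{F(\zeta)-it:t\ge 0\}=\Omega^+$, since the downward ray misses $\Omega^+$. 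Hence the branch of $\arg(w-F(\zeta))$ on the slit plane $\mathbb C\setminus\{F(\zeta)-it:t\ge0\}$, with values in $(-\pi/2,3\pi/2)$, restricts to a continuous branch on $\Omega^+$. Define
\[
\psi(z,\zeta)=\arg\bigl(F(z)-F(\zeta)\bigr)-\arg(z-\zeta)\in(-3\pi/2,3\pi/2),
\]
with the two branches above. This $\psi$ is continuous in $(z,\zeta)\in\mathbb H\times\mathbb R$, because the slit varies continuously with $F(\zeta)$, and $\mathbb H\times\mathbb R$ is connected. Therefore $\phi-\psi$ is a constant multiple of $2\pi$ on $\mathbb H\times\mathbb R$, and we normalize $\phi$ so that $\phi=\psi$ there. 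This gives $|\phi|<3\pi/2$.

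\emph{Main obstacle.} The delicate point is that the normalization of $\phi$ must be compatible with the branch fixed on all of $\mathbb H\times\overline{\mathbb H}$. This holds because $\mathbb H\times\overline{\mathbb H}$ is connected and the normalization changes $\phi$ only by a global constant. One must also check carefully that $\arg(F(z)-F(\zeta))$ is jointly continuous in $(z,\zeta)$ with the moving slit. This follows because for $w\in\Omega^+$ and $x=\operatorname{Re}F(\zeta)$, the point $w$ never lies directly below $F(\zeta)$. Indeed, $w=x+iy$ with $y\le f(x)$ is impossible for $w\in\Omega^+$, so $w-F(\zeta)$ stays away from the negative imaginary axis.
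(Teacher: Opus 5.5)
Your proposal is correct and follows essentially the same route as the paper: a branch exists by convexity of $\mathbb H\times\overline{\mathbb H}$, the bound comes from the $(-\pi/2,3\pi/2)$-valued argument of $F(z)-F(\zeta)$ (which never points vertically downward) minus the $(0,\pi)$-valued argument of $z-\zeta$, and the normalization is fixed on the connected set $\mathbb H\times\mathbb R$, with symmetry obtained from agreement on the diagonal. One small wording point: you only need $F(z)-F(\zeta)$ to avoid the closed ray $\{-it:t\ge0\}$ pointwise, not to stay a positive distance away from it. That alone makes the composition with the slit-plane argument jointly continuous.
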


\begin{proof}
The function $\widetilde\phi$ is continuous on $\mathbb H\times\overline{\mathbb H}$, because $F$ is holomorphic on $\mathbb H$,
continuous on $\overline{\mathbb H}$, and $z\ne\zeta$ whenever $\zeta\in\mathbb R$.
It does not vanish, since $F'\ne0$ on $\mathbb H$ and $F$ is injective on $\overline{\mathbb H}$.
As $\mathbb H\times\overline{\mathbb H}$ is convex, $\widetilde\phi$ admits continuous branches of the argument, and any two of them differ by a constant in $2\pi\mathbb Z$.

Let $(z,\zeta)\in\mathbb H\times\mathbb R$. Then $F(z)\in\Omega$ and $F(\zeta)\in\Gamma$. The vector $F(z)-F(\zeta)$ is nonzero, and it cannot point vertically downward,
since the points directly below a point of $\Gamma$ do not belong to $\Omega$. Hence there is a continuous function $\alpha$ on $\mathbb H\times\mathbb R$ with values in $(-\pi/2,3\pi/2)$ such that $\alpha(z,\zeta)$ is an argument of $F(z)-F(\zeta)$. Let $\beta(z,\zeta)\in(0,\pi)$ be the argument of $z-\zeta$.
Then $\alpha-\beta$ is a continuous branch of $\arg\widetilde\phi$ on the connected set $\mathbb H\times\mathbb R$, with values in $(-3\pi/2,3\pi/2)$.
We choose $\phi$ as the continuous branch on $\mathbb H\times\overline{\mathbb H}$ that coincides with $\alpha-\beta$ at one point of $\mathbb H\times\mathbb R$. Then $\phi=\alpha-\beta$ on all of $\mathbb H\times\mathbb R$, which gives the bound.

Since $\widetilde\phi(z,\zeta)=\widetilde\phi(\zeta,z)$, the function $(z,\zeta)\mapsto\phi(\zeta,z)$ is also a continuous branch of $\arg\widetilde\phi$ on $\mathbb H\times\mathbb H$.
It coincides with $\phi$ on the diagonal, and therefore on the connected set $\mathbb H\times\mathbb H$. The last assertion is clear from $\widetilde\phi(z,z)=F'(z)$.
\end{proof}

\begin{lemma}\label{log_dis_est:label}
Let $F\colon \mathbb H\to\mathbb C$ be conformal, and let $\phi$ be a continuous branch of $\arg\widetilde\phi$ on $\mathbb H\times\mathbb H$.
Then
\[
\left|\phi(w,a)-\phi(a,a)\right|\le 4\, d_{\mathbb H}(w,a)+\frac{\pi}{2}
\]
for all $a,w\in\mathbb H$.
\end{lemma}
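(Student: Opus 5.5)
Fix $a\in\mathbb H$ and pass to the disk via $\Theta_a=F\circ T_a^{-1}\circ\kappa$. Put $\xi=\kappa^{-1}(T_a(w))$. Then $\Theta_a(\xi)=F(w)$, and since $T_a$ and $\kappa$ are isometries, $d_{\mathbb D}(0,\xi)=d_{\mathbb H}(w,a)$. The plan is to express $\widetilde\phi(w,a)/\widetilde\phi(a,a)$ as a product of two factors: one controlled by Proposition~\ref{fz-f0/f':label}, and one coming purely from the Möbius maps whose argument is bounded by an explicit constant.

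For $w\neq a$ we write
\[
\frac{\widetilde\phi(w,a)}{F'(a)}
=\frac{F(w)-F(a)}{(w-a)F'(a)}
=\frac{\Theta_a(\xi)-\Theta_a(0)}{\xi\,\Theta_a'(0)}\cdot\frac{\xi\,\Theta_a'(0)}{(w-a)F'(a)}.
\]
Since $\Theta_a'(0)=2i(\operatorname{Im}a)F'(a)$ and $(w-a)/\operatorname{Im}a=T_a(w)-i=\kappa(\xi)-i=2i\xi/(1-\xi)$, the second factor equals $1-\xi$. Its principal argument lies in $(-\pi/2,\pi/2)$ because $\operatorname{Re}(1-\xi)>0$ on $\mathbb D$. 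For the first factor, Proposition~\ref{fz-f0/f':label} applied to $\Theta_a$ gives a holomorphic logarithm $Q$ on $\mathbb D$ with $Q(0)=0$ and $|Q(\xi)|\le 4d_{\mathbb D}(0,\xi)$, so $|\operatorname{Im}Q(\xi)|\le 4d_{\mathbb H}(w,a)$.

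Next comes the branch matching. The function $w\mapsto \operatorname{Im}Q(\xi(w))+\operatorname{Arg}(1-\xi(w))+\phi(a,a)$ is continuous on $\mathbb H$ (including $w=a$, where it equals $\phi(a,a)$) and is an argument of $\widetilde\phi(w,a)$. The function $w\mapsto\phi(w,a)$ is also continuous on the connected set $\mathbb H$ and is an argument of the same quantity. The two agree at $w=a$, since $\xi=0$ there, so they agree everywhere. Hence
\[
|\phi(w,a)-\phi(a,a)|\le|\operatorname{Im}Q(\xi)|+|\operatorname{Arg}(1-\xi)|<4d_{\mathbb H}(w,a)+\frac{\pi}{2}.
\]

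The only delicate step is this branch identification: one must check that the composite expression is continuous across $w=a$ (where $\widetilde\phi$ is defined as $F'$) and uses the same normalization as $\phi$. This follows from $Q(0)=0$ and $\operatorname{Arg}1=0$. The algebraic identity for the Möbius factor is a routine check.
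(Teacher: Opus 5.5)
Your proposal is correct and matches the paper's proof step for step. It uses the same factorization $\frac{F(w)-F(a)}{(w-a)F'(a)}=(1-\xi)\,\frac{\Theta_a(\xi)-\Theta_a(0)}{\xi\,\Theta_a'(0)}$, Proposition~\ref{fz-f0/f':label} applied to $\Theta_a$ for the first factor, $\operatorname{Re}(1-\xi)>0$ for the second, and uniqueness of continuous argument branches on $\mathbb H$ normalized at $w=a$.
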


\begin{proof}
Fix $a\in\mathbb H$. The function
\[
q(w)=\frac{F(w)-F(a)}{(w-a)F'(a)}\quad(w\ne a),\qquad q(a)=1,
\]
is holomorphic and zero-free on $\mathbb H$, and $w\mapsto\phi(w,a)-\phi(a,a)$ is the continuous branch of $\arg q$ on $\mathbb H$ that vanishes at $w=a$.

Write $w=T_a^{-1}(\kappa(\xi))$ with $\xi\in\mathbb D$, so that $\xi=0$ corresponds to $w=a$ and $d_{\mathbb D}(0,\xi)=d_{\mathbb H}(w,a)$.
Since $w-a=(\operatorname{Im}a)\bigl(\kappa(\xi)-i\bigr)=(\operatorname{Im}a)\,\frac{2i\xi}{1-\xi}$ and $\Theta_a'(0)=2i(\operatorname{Im}a)F'(a)$, a direct computation gives
\begin{equation}\label{disk_identity}
q(w)
=(1-\xi)\,
\frac{\Theta_a(\xi)-\Theta_a(0)}{\xi \Theta_a'(0)}.
\end{equation}
Let $\log\frac{\Theta_a(\xi)-\Theta_a(0)}{\xi \Theta_a'(0)}$ be the branch that vanishes at $\xi=0$, and let $\log(1-\xi)$ be the principal branch.
By \eqref{disk_identity}, the imaginary part of the sum of these two logarithms, regarded as a function of $w$, is a continuous branch of $\arg q$ that vanishes at $w=a$.
By uniqueness of such branches on the connected set $\mathbb H$, it equals $\phi(w,a)-\phi(a,a)$.
Proposition~\ref{fz-f0/f':label} applied to $\Theta_a$ and the inequality $|\arg(1-\xi)|<\pi/2$, which holds because $\operatorname{Re}(1-\xi)>0$, now give
\[
\left|\phi(w,a)-\phi(a,a)\right|
\le \left|\log\frac{\Theta_a(\xi)-\Theta_a(0)}{\xi \Theta_a'(0)}\right|+\left|\arg (1-\xi)\right|
\le 4\, d_{\mathbb D}(0,\xi)+\frac{\pi}{2}=4\, d_{\mathbb H}(w,a)+\frac{\pi}{2}.
\qedhere
\]
\end{proof}

We are now ready to prove the main step, following the proof of \cite[Theorem 6.18]{Pom}.
It suffices to consider the domain $\Omega^+$ above a graph. Indeed, if $F\colon\mathbb H\to\Omega^-$ is conformal with $F(\infty)=\infty$, where $\Omega^-$ lies below the graph of $f$, then by the rotation $z\mapsto -z$ the map $-F$ sends $\mathbb H$ conformally onto the domain above the graph of $x\mapsto -f(-x)$, and $\log(-F)'=\log F'+i\pi$.

\begin{theorem}\label{argFprime-UHP}
Let $\Omega=\Omega^+$ be the graph domain above the graph of a continuous function, and let $F\colon \mathbb H\to\Omega$ be conformal with $F(\infty)=\infty$.
Then every continuous branch of $\arg F'$ is bounded on $\mathbb H$.
\end{theorem}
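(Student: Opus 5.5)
It suffices to bound $\phi(a,a)$ uniformly for $a\in\mathbb H$, where $\phi$ is the branch from Lemma~\ref{arg_def:label}. Every continuous branch of $\arg F'$ differs from $\phi(z,z)$ by a constant in $2\pi\mathbb Z$, so bounding this one branch bounds them all. The plan is to compare $\phi(a,a)$ with a boundary value $\phi(a,\zeta)$, $\zeta\in\mathbb R$, which Lemma~\ref{arg_def:label} bounds by $3\pi/2$. The comparison runs along a geodesic ray $S$ from a nearby point to $\zeta$, chosen via Proposition~\ref{uhp-f(S):label} so that $F(S)$ is short compared with $d_F$. Shortness of $F(S)$ will keep the argument of $F(w)-F(a)$ nearly constant along $S$.

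Fix $a\in\mathbb H$. Apply Proposition~\ref{uhp-f(S):label} with $z_0=a$ and $I=[\operatorname{Re}a-\operatorname{Im}a,\operatorname{Re}a+\operatorname{Im}a]$; then $\omega_{\mathbb H}(a,I)=1/2$ is a fixed constant. This gives a geodesic ray $S$ from $a$ to some $\zeta\in I$ with $\ell(F(S))\le C_3\,d_F(a)$, where $C_3$ is absolute. Because $C_3$ may exceed $1$, I will not compare at $a$ itself. Instead I will pick a point $b\in S$ at bounded hyperbolic distance from $a$ such that the remaining subray $S_b$ from $b$ to $\zeta$ satisfies $\ell(F(S_b))\le \tfrac{1}{8}\tanh(d_{\mathbb H}(a,b))\,d_F(a)$.

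Producing such a $b$ is the main obstacle. The simplest route is to rescale the construction: apply Proposition~\ref{uhp-f(S):label} not at $a$ but at a point $z_0$ on the vertical segment below $a$ with $\operatorname{Im}z_0=\epsilon\operatorname{Im}a$, using the interval $I$ of length $2\epsilon\operatorname{Im}a$ centred under $z_0$. This gives $\ell(F(S))\le C_3\,d_F(z_0)$. By Koebe distortion, comparing $d_F(z_0)$ with $d_F(a)$ gives $d_F(z_0)\le C\epsilon^{c}d_F(a)$ for some power $c>0$. If that fails, I would use the alternative bound $d_F(z_0)\le 4|F(z_0)-F(\zeta')|$ together with H\"older continuity of conformal maps onto quasidisks; but graph domains are not quasidisks, so I would first try a direct Koebe estimate. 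Once $\epsilon$ is fixed small, the distance $d_{\mathbb H}(a,z_0)=\log(1/\epsilon)$ is a constant, and Proposition~\ref{|Fw-Fz|:label} gives $|F(w)-F(a)|\ge \tfrac14\tanh(\log(1/\epsilon))\,d_F(a)$ for every $w$ near $z_0$. For $\epsilon$ small this dominates $\ell(F(S))$. Hence for $w\in S$ the point $F(w)$ stays in a disk around $F(z_0)$ that omits $F(a)$ by a definite margin. Consequently $\arg(F(w)-F(a))$ varies by at most a fixed angle $<\pi$ along $S$, and the same holds for $\arg(w-a)$ since $S$ lies in a small Euclidean region away from $a$.

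We then chain three estimates. First, $|\phi(a,z_0)-\phi(a,a)|\le 4\log(1/\epsilon)+\pi/2$ by Lemma~\ref{log_dis_est:label} together with the symmetry $\phi(z,\zeta)=\phi(\zeta,z)$. Second, $|\phi(a,\zeta)-\phi(a,z_0)|\le 2\pi$, because $w\mapsto\phi(a,w)$ is continuous along $S\cup\{\zeta\}$ and its increment equals the change of $\arg(F(w)-F(a))$ minus that of $\arg(w-a)$, both controlled above. Third, $|\phi(a,\zeta)|<3\pi/2$ by Lemma~\ref{arg_def:label}. Summing gives a bound on $|\phi(a,a)|$ independent of $a$, which proves the theorem.
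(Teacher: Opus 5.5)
\textbf{There is a genuine gap, at exactly the step you flag as the main obstacle.}

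The claim that Koebe distortion gives $d_F(z_0)\le C\epsilon^{c}d_F(a)$ with $c>0$ is false. Koebe distortion only gives two-sided comparability of $d_F$ at points at bounded hyperbolic distance; here it gives $d_F(z_0)\le 4\epsilon^{-2}d_F(a)$. No decay can hold for all conformal maps: for $z\mapsto\frac{\delta}{\pi}\log z$, which maps $\mathbb H$ onto a strip of width $\delta$, one has $d_F(it)=\delta/2$ for every $t>0$.

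Restricting to graph domains does not help. A deep narrow well between two tall thin spikes of the graph behaves like this strip. With the symmetric normalization, the vertical segment from $a$ to $z_0$ is mapped onto the centre line of the well, so $d_F(z_0)\approx d_F(a)$ for any fixed $\epsilon$ once the well is deep enough. The only lower bound for $|F(w)-F(a)|$ you use, Proposition~\ref{|Fw-Fz|:label}, never exceeds $\frac14 d_F(a)$. So you cannot conclude that $F(S)$ stays in a disk omitting $F(a)$. Your fallback via H\"older continuity is unavailable, as you note yourself.

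\textbf{The missing idea is that this obstacle is not real.} Your worry that ``$C_3$ may exceed $1$'' is unfounded: you never need the change of $\arg(F(w)-F(a))$ to be less than $\pi$, only bounded. Along a subarc $S'$ of $S$ one has $d\arg(F(w)-F(a))=\operatorname{Im}\bigl(F'(w)\,dw/(F(w)-F(a))\bigr)$. Hence the total change is at most $\ell(F(S'))/\inf_{w\in S'}|F(w)-F(a)|$.

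The repaired argument runs as follows.
\begin{enumerate}
\item Take the ray $S$ from Proposition~\ref{uhp-f(S):label} at $a$ itself. Choose $z'\in S$ with $d_{\mathbb H}(a,z')=1$, and let $S'$ be the subray from $z'$ to $\zeta$.
\item Since $d_{\mathbb H}(a,\cdot)$ increases along $S$, Proposition~\ref{|Fw-Fz|:label} gives $|F(w)-F(a)|\ge\frac14(\tanh 1)\,d_F(a)$ on $S'$. So the change of $\arg(F(w)-F(a))$ along $S'$ is at most $4C_3/\tanh 1$, whatever the size of $C_3$.
\item The change of $\arg(w-a)$ along a geodesic issuing from $a$ is less than $\pi$.
\item Combine these with Lemma~\ref{log_dis_est:label}, which gives $4+\pi/2$ between $a$ and $z'$, and Lemma~\ref{arg_def:label}, which gives $3\pi/2$ at $\zeta$.
\end{enumerate}
This proves the theorem and is precisely the paper's argument. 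The same integral bound would also rescue your rescaled version, using $d_F(z_0)\le 4\epsilon^{-2}d_F(a)$, but the rescaling is then superfluous.
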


\begin{proof}
Any two continuous branches of $\arg F'$ differ by a constant, so it suffices to show that $\phi(z,z)$ is bounded on $\mathbb H$, where $\phi$ is the function given by Lemma~\ref{arg_def:label}.

Fix $z=x+iy\in\mathbb H$ and let $I=[x-y,x+y]$. The segment $I$ subtends the angle $\pi/2$ at $z$, so $\omega_{\mathbb H}(z,I)=1/2$.
By Proposition~\ref{uhp-f(S):label}, there exists a hyperbolic geodesic segment $S\subset\mathbb H$
joining $z$ to some point $\zeta\in I$ such that
\begin{equation}\label{FS/DF}
\ell(F(S))\le C_1 e^{2C_2}\, d_F(z).
\end{equation}
Since $S$ has infinite hyperbolic length, there is a point $z'\in S$ with $d_{\mathbb H}(z,z')=1$. Let $S'$ be the subarc of $S$ from $z'$ to $\zeta$.

By the symmetry of $\phi$ and Lemma~\ref{log_dis_est:label} with $(w,a)=(z',z)$,
\begin{equation}\label{|phi|_1}
|\phi(z,z)-\phi(z,z')|=|\phi(z',z)-\phi(z,z)|
\le 4\, d_{\mathbb H}(z,z')+\frac{\pi}{2}
=4+\frac{\pi}{2}.
\end{equation}

Next we estimate the change of $w\mapsto\phi(z,w)$ along $S'$. Let $\alpha(w)$ and $\beta(w)$ be continuous branches of $\arg(F(z)-F(w))$ and $\arg(z-w)$ along $S'$.
Then $\alpha-\beta$ differs from $\phi(z,\cdot)$ by a constant on $S'$, and hence
\begin{equation}\label{|phi|_21}
|\phi(z,z')-\phi(z,\zeta)|
\le|\alpha(z')-\alpha(\zeta)|+|\beta(z')-\beta(\zeta)|.
\end{equation}
The geodesic $S$ lies on a circle orthogonal to $\mathbb R$ or on a vertical line passing through $z$. In either case the directions of $w-z$, $w\in S\setminus\{z\}$, lie in a closed half-plane whose boundary line passes through $0$, and they lie in its interior unless they are all equal. Therefore $|\beta(z')-\beta(\zeta)|<\pi$.
For the first term, note that $d\alpha=\operatorname{Im}\bigl(F'(w)\,dw/(F(w)-F(z))\bigr)$ on $S'\setminus\{\zeta\}$. Since the hyperbolic distance from $z$ increases along $S$, every $w\in S'$ satisfies $d_{\mathbb H}(z,w)\ge 1$, and Proposition~\ref{|Fw-Fz|:label} gives $|F(w)-F(z)|\ge\frac14(\tanh1)\,d_F(z)$. As $\alpha$ is continuous at $\zeta$, we deduce from \eqref{FS/DF} that
\begin{equation}\label{|phi|_22}
|\alpha(z')-\alpha(\zeta)|
\le
\int_{S'}\frac{|F'(w)|}{|F(w)-F(z)|}\,|dw|
\le
\frac{4}{d_F(z)\tanh1}\,\ell(F(S))
\le
\frac{4C_1 e^{2C_2}}{\tanh1}.
\end{equation}

Finally, $|\phi(z,\zeta)|<3\pi/2$ by Lemma~\ref{arg_def:label}, because $\zeta\in\mathbb R$.
Together with \eqref{|phi|_1}, \eqref{|phi|_21}, and \eqref{|phi|_22}, this shows that
\[
|\phi(z,z)|\le \frac{3\pi}{2}+4+\frac{\pi}{2}+\pi+\frac{4C_1e^{2C_2}}{\tanh1},
\]
and the right-hand side does not depend on $z$.
\end{proof}

The following standard fact completes the argument.

\begin{theorem}\label{bmoa:label}
Let $F$ be holomorphic on $\mathbb H$ with $F'(z)\neq0$ for all $z\in\mathbb H$.
If a continuous branch of $\arg F'$ is bounded on $\mathbb H$, then
$\log F' \in \mathrm{BMOA}(\mathbb H)$.
\end{theorem}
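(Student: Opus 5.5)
Write $\Phi=\log F'=u+iv$ on $\mathbb H$, where $v$ is the given bounded continuous branch of $\arg F'$ with $|v|\le M$. Then $\Phi$ is holomorphic on the simply connected domain $\mathbb H$, and $|\Phi'|=|\nabla v|$ by the Cauchy--Riemann equations. By the Carleson measure characterization recalled in Section~\ref{sec:functions_embeddings_and_curves}, it suffices to show that $|\nabla v|^2\,y\,dx\,dy$ is a Carleson measure. Equivalently, I would show that $\Phi$ has boundary values in $\mathrm{BMO}(\mathbb R)$ and is their Poisson integral.

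\textbf{Route via boundary values.} Since $v$ is a bounded harmonic function on $\mathbb H$, it is the Poisson integral of some $v^*\in L^\infty(\mathbb R)$ with $\|v^*\|_\infty\le M$. The harmonic conjugate $-u$ is then, up to an additive constant, the conjugate Poisson integral of $v^*$. Thus $u^*=-Hv^*+c$, where $H$ is the Hilbert transform. Since $H$ maps $L^\infty$ boundedly into $\mathrm{BMO}$, we get $\Phi^*\in\mathrm{BMO}(\mathbb R)$ with norm $\lesssim M$.

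The delicate point is making the conjugate Poisson integral of an $L^\infty$ function precise. One must use the modified kernel
\[
\frac{x-t}{(x-t)^2+y^2}+\frac{t}{1+t^2}
\]
so that the integral converges, and then verify that $\Phi$ equals the Poisson integral of $\Phi^*$ modulo constants. I expect this to be the main technical obstacle, though it is standard (Garnett, Chapter VI).

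\textbf{Direct route (cleaner, which I would use).} I would instead verify the Carleson condition directly. Since $|\nabla v|^2=\tfrac12\Delta(v^2)$, Green's theorem applied to $v^2$ on a box $Q=[x_0-h,x_0+h]\times(\varepsilon,2h)$, with weight $y$, gives $\int_Q|\nabla v|^2 y\lesssim M^2h$. Concretely, I would use the identity
\[
\int y\,\Delta(v^2)=\int_{\partial}\bigl(y\,\partial_n v^2-v^2\partial_n y\bigr)
\]
and control the boundary terms. The terms involving $v^2$ are $\lesssim M^2h$. The normal-derivative terms on the sides and top are handled by the gradient estimate $|\nabla v(z)|\le CM/\operatorname{Im}z$, which follows from the Poisson representation (or Cauchy estimates for bounded harmonic functions). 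The bottom edge contributes $\varepsilon\int\partial_y v^2$, which is $O(M^2h)$ by the same gradient bound. To avoid boundary derivative terms entirely, I would multiply by a smooth cutoff adapted to the box. Letting $\varepsilon\to0$ then gives the Carleson bound with constant $CM^2$, and hence $\log F'\in\mathrm{BMOA}(\mathbb H)$.
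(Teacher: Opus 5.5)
Your proposal is correct, and the route you say you would actually use differs from the paper's. The paper works through boundary values. The bounded harmonic function $V=\arg F'$ is the Poisson integral of some $V^*\in L^\infty(\mathbb R)\subset\mathrm{BMO}(\mathbb R)$, and the paper then quotes the Fefferman--Stein theorem (the direction ``$\mathrm{BMO}$ data $\Rightarrow$ $|\nabla P[v]|^2y\,dx\,dy$ is Carleson'') to conclude.

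Your direct argument never goes to the boundary. Green's second identity with $f=y$ and $g=v^2$ on $Q_\varepsilon=[x_0-h,x_0+h]\times(\varepsilon,2h)$ gives
\[
2\int_{Q_\varepsilon}|\nabla v|^2\,y\,dx\,dy=\int_{\partial Q_\varepsilon}\bigl(2yv\,\partial_n v-v^2\,\partial_n y\bigr)\,ds .
\]
With $|v|\le M$ and the interior estimate $|\nabla v(z)|\le 2M/\operatorname{Im}z$, the integrand is $O(M^2)$ pointwise. This estimate comes from the mean value property on the disk of radius $\operatorname{Im}z$ about $z$. Since the boundary of $Q_\varepsilon$ has length $O(h)$, the bound is uniform in $\varepsilon$, and monotone convergence as $\varepsilon\to0$ gives the Carleson condition with constant $\lesssim M^2$.

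This is fully self-contained. It uses neither the Poisson representation of bounded harmonic functions nor Fefferman--Stein. The price is that it relies essentially on the boundedness of $v$, which is needed to control $v^2$ and $v\,\partial_n v$ on $\partial Q_\varepsilon$. The paper's citation, by contrast, would apply verbatim to any $v$ that is the Poisson integral of $\mathrm{BMO}$ data.

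Your Route~1 via the Hilbert transform is not needed. The closing remark about a smooth cutoff is also superfluous, since you have already controlled all the boundary terms.
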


\begin{proof}
Let $\Phi=\log F'$ be the holomorphic branch whose imaginary part $V$ is the given bounded branch of $\arg F'$.
By the Cauchy--Riemann equations, $|\Phi'|^2=|\nabla V|^2$ on $\mathbb H$. The bounded harmonic function $V$ is the Poisson integral of its non-tangential boundary values $V^*\in L^\infty(\mathbb R)$ (see \cite[Chapter I, Theorem 3.5]{Ga}).
Since $L^\infty(\mathbb R)\subset\mathrm{BMO}(\mathbb R)$, the theorem of Fefferman and Stein (see \cite[Chapter VI]{Ga}), which states that $|\nabla P[v](z)|^2\,y\,dx\,dy$ is a Carleson measure for the Poisson integral $P[v]$ of every $v\in\mathrm{BMO}(\mathbb R)$, shows that
$|\Phi'(z)|^2\,y\,dx\,dy=|\nabla V(z)|^2\,y\,dx\,dy$ is a Carleson measure. Hence $\Phi\in\mathrm{BMOA}(\mathbb H)$.
\end{proof}

As a consequence of Theorems~\ref{argFprime-UHP} and \ref{bmoa:label}, together with the reduction preceding Theorem~\ref{argFprime-UHP}, we obtain the following.

\begin{corollary}\label{graph_BMO:label}
Every graph domain is a $\mathrm{BMO}$ domain.
\end{corollary}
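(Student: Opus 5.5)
The corollary follows by assembling results already proved; I would only check that the reduction and the definitions match. Let $f\colon\mathbb R\to\mathbb R$ be continuous and let $\Omega$ be one of the two graph domains $\Omega^\pm$. Let $F\colon\mathbb H\to\Omega$ be conformal with $F(\infty)=\infty$. By definition, we must show $\log F'\in\mathrm{BMOA}(\mathbb H)$ for some (equivalently, every) holomorphic branch of $\log F'$. Branches differ by additive constants in $2\pi i\mathbb Z$, and $\mathrm{BMOA}$ is defined through the Carleson condition on $|\Phi'|^2y\,dx\,dy$, which ignores constants. So the choice of branch is irrelevant.

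First I treat $\Omega=\Omega^+$. Theorem~\ref{argFprime-UHP} says that every continuous branch of $\arg F'$ is bounded on $\mathbb H$. Since $F$ is conformal, $F'$ is zero-free, so Theorem~\ref{bmoa:label} applies and gives $\log F'\in\mathrm{BMOA}(\mathbb H)$.

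Next I treat $\Omega=\Omega^-$ using the reduction stated before Theorem~\ref{argFprime-UHP}. The map $-F$ sends $\mathbb H$ conformally onto $-\Omega^-=\{x+iy: y>-f(-x)\}$, which is the domain above the graph of the continuous function $x\mapsto-f(-x)$. Moreover $-F(\infty)=\infty$. The first case then gives $\log(-F)'\in\mathrm{BMOA}(\mathbb H)$. Since $\log(-F)'=\log F'+i\pi$ for suitable branches, the constant drops out of the Carleson measure, and $\log F'\in\mathrm{BMOA}(\mathbb H)$.

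There is no real obstacle here: all the analytic work is in Theorem~\ref{argFprime-UHP}. The only points to verify are that the normalization $F(\infty)=\infty$ is preserved under $z\mapsto -z$, and that $\mathrm{BMO}$-domain membership does not depend on the choice of branch. Both are immediate from the remarks above.
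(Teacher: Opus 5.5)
Your proposal is correct and is essentially the paper's argument: Theorem~\ref{argFprime-UHP} combined with Theorem~\ref{bmoa:label} handles $\Omega^+$, and the rotation $z\mapsto -z$ reduces $\Omega^-$ to the domain above the graph of $x\mapsto -f(-x)$, using $\log(-F)'=\log F'+i\pi$. Your checks that the branch choice is irrelevant and that $F(\infty)=\infty$ is preserved are the only points needing verification, and you handle both correctly.
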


A geometric characterization of $\mathrm{BMO}$ domains was given by Bishop and Jones \cite{BJ}
(see also \cite[Theorem 5.3]{GM}).
A quasicircle passing through $\infty$ is called a \emph{Bishop--Jones quasicircle} if one of its complementary domains is a $\mathrm{BMO}$ domain.
We note that the boundary of a $\mathrm{BMO}$ domain need not be a quasicircle. For instance, the graph of $f(x)=\sqrt{|x|}$ is not a quasicircle, while both of its complementary domains are $\mathrm{BMO}$ domains by Corollary~\ref{graph_BMO:label}.

\section{Graphs of Weierstrass--Zygmund functions and asymptotic conformality}\label{Weierstrass_Zygmund}

In this section we relate the Zygmund regularity of a function $f$ to the geometry of its graph.
We show that the graph of a Zygmund function is a quasicircle and that the graph of a little Zygmund function is an asymptotically conformal quasicircle.
Finally, we verify that a Weierstrass-type function provides an example that is not asymptotically smooth.

Throughout this section, for a continuous function $f\colon\mathbb{R}\to\mathbb{R}$ we put
\[
F(x)=x+i f(x),\qquad \Gamma=F(\mathbb R)=\{x+i f(x)\colon x\in\mathbb{R}\}.
\]

It is known that, for every $f\in\Lambda_*(\mathbb R)$,
the natural parametrization $F(x)=x+if(x)$ is a quasisymmetric
embedding; see \cite[Theorem 1.4 and Remark 3.1]{KO}.
We include a direct proof based on
Proposition~\ref{little-strongly}.

\begin{theorem}\label{zyg-qs:label}
If $f\in\Lambda_*(\mathbb R)$, then $F(x)=x+if(x)$ is a quasisymmetric embedding of $\mathbb{R}$ into $\mathbb C$.
\end{theorem}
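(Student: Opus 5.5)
Since $\mathbb R$ into $\mathbb C$ weak quasisymmetry implies quasisymmetry \cite{TV}, it suffices to prove weak quasisymmetry. Thus we need a constant $H$ with $|F(x)-F(a)|\le H\,|F(x)-F(b)|$ whenever $|x-a|\le|x-b|$. Put $s=|x-a|$ and $t=|x-b|$, so $0<s\le t$. The trivial bounds are $|F(x)-F(b)|\ge t$ and $|F(x)-F(a)|\le s+|f(x)-f(a)|$. Hence the whole task reduces to controlling the first difference $|f(x)-f(a)|$ by a multiple of $\max\{t,|f(x)-f(b)|\}$. The only tool needed is Proposition~\ref{little-strongly}, which says that $f$ deviates from any chord over an interval of length $\ell$ by at most $\frac{\ell}{2}\|f\|_{\Lambda_*}$.

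Let $K=\|f\|_{\Lambda_*}$. Set $b'=x\pm t$, chosen on the same side of $x$ as $a$, so that $a$ lies between $x$ and $b'$. We compare in two steps.

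\textbf{Step 1: from $a$ to $b'$.} Apply Proposition~\ref{little-strongly} to the chord of $f$ over the interval with endpoints $x$ and $b'$. The point $a$ lies on this interval, so
\[
\Bigl|f(a)-f(x)-\tfrac{a-x}{b'-x}\bigl(f(b')-f(x)\bigr)\Bigr|\le \tfrac t2 K .
\]
This gives $|f(a)-f(x)|\le |f(b')-f(x)|+Kt/2$.

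\textbf{Step 2: from $b'$ to $b$.} If $b'=b$, nothing remains. Otherwise $b'=2x-b$, and the second difference gives
\[
|f(b')-f(x)|\le |f(x)-f(b)|+Kt ,
\]
since $\Delta_t^2 f(x)=(f(b')-f(x))-(f(x)-f(b))$.

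Combining the two steps,
\[
|F(x)-F(a)|\le s+|f(x)-f(b)|+\tfrac32 Kt\le (1+\tfrac32K)\,t+|f(x)-f(b)|\le (2+\tfrac32K)\,|F(x)-F(b)|,
\]
using $|F(x)-F(b)|\ge\max\{t,|f(x)-f(b)|\}$. So $H=2+\tfrac32K$ works, and the quasisymmetry of $F$ follows from \cite{TV}.

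The main point to get right is the reflection in Step 2. It handles the case where $a$ and $b$ lie on opposite sides of $x$, which is where the Zygmund condition, rather than a mere Lipschitz bound, is used essentially. The same-side case is handled purely by the chord estimate in Step 1. No other obstacle is expected; in particular there is no need to control $f$ from below, because the trivial bound $|F(x)-F(b)|\ge t$ absorbs all error terms of order $Kt$.

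If one prefers to avoid citing \cite{TV}, the plan is to verify the $\eta$-condition for general ratio $s/t\le T$ by the same argument. In that case the chord interval has length $\max(s,t)$, and the bound becomes $\eta(T)\lesssim (1+K)\max(1,T)$.
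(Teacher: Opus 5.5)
Your argument is correct and is essentially the paper's proof. It uses the same chord estimate from Proposition~\ref{little-strongly} on the interval from $x$ to the point at distance $|x-b|$ on the side of $a$, then the same reflection through $\Delta^2_{|x-b|}f(x)$. This gives the same weak quasisymmetry constant $2+\frac32\|f\|_{\Lambda_*}$, after which \cite{TV} is invoked.

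Only your closing aside is inaccurate. A bound $\eta(T)\lesssim(1+K)\max(1,T)$ does not tend to $0$ as $T\to0$, so it does not give the homeomorphism required in \eqref{eq:qs}. Linear growth for large $T$ also fails in general: for $f(x)=x\log|x|\in\Lambda_*(\mathbb R)$ with $x=0$, $a=T>1$, $b=1$, the ratio is $T\sqrt{1+\log^2T}$. Bypassing \cite{TV} would therefore need an additional logarithmic first-difference estimate.
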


\begin{proof}
Let $M=\|f\|_{\Lambda_*}$. Fix distinct $x,a,b\in\mathbb R$ with $|x-a|\le|x-b|$, and put
$D(u)=f(x+u)-f(x)$, $t=|x-a|$ and $s=|x-b|$. Write $a=x+\sigma t$ and $b=x+\tau s$
with $\sigma,\tau\in\{-1,1\}$.
The affine function interpolating $f$ at $x$ and $x+\sigma s$ takes the
value $f(x)+\frac ts D(\sigma s)$ at $a$, so Proposition~\ref{little-strongly}
applied on the interval with endpoints $x$ and $x+\sigma s$ gives
\[
\Bigl|D(\sigma t)-\frac ts\,D(\sigma s)\Bigr|\le\frac{M s}{2},
\qquad\text{hence}\qquad
|D(\sigma t)|\le|D(\sigma s)|+\frac{M s}{2}.
\]
Since $D(s)+D(-s)=\Delta_s^2f(x)$, we also have
$|D(\sigma s)|\le|D(\tau s)|+M s$. Consequently,
\[
|F(a)-F(x)|\le t+|f(a)-f(x)|\le\Bigl(1+\frac{3M }{2}\Bigr)s+|f(b)-f(x)|
\le\Bigl(2+\frac{3M }{2}\Bigr)|F(b)-F(x)|,
\]
because $|F(b)-F(x)|\ge\max\{s,\,|f(b)-f(x)|\}$.
Thus $F$ is weakly quasisymmetric, and therefore quasisymmetric
(see Section~\ref{sec:functions_embeddings_and_curves}).
\end{proof}

\begin{remark}
The converse of the fact that graphs of Zygmund functions are quasicircles is false.
Let $f$ be the continuous function with $f(x)=1$ for $x\le 1$ such that, for each integer $n\ge1$,
\[
f(x)=\begin{cases}
n+n(x-n), & x\in (n,n+\tfrac{1}{n}],\\
n+1, & x\in (n+\tfrac{1}{n},n+1].
\end{cases}
\]
Since $f$ is nondecreasing and locally Lipschitz, for $a<b$ the length of the graph over $[a,b]$ is at most
$(b-a)+(f(b)-f(a))\le\sqrt2\,|F(b)-F(a)|$. Thus the graph $\Gamma$ is a chord-arc curve, and hence a quasicircle.
However, for $n\ge3$ and $h=1/n$ we have $\Delta_h^2f(n)=nh$ and
$|F(n+h)-F(n)|/|F(n)-F(n-h)|=\sqrt{n^2+1}$. Hence $f\notin\Lambda_*(\mathbb R)$,
and $F$ is not even weakly quasisymmetric.
This example also shows that the assertion in a problem of \cite[p.~261]{GM} does not hold.
\end{remark}

\begin{theorem}\label{graph-AS}
If $f\in\lambda_*(\mathbb R)$, then $F(x)=x+if(x)$ is an asymptotically symmetric embedding, and $\Gamma=F(\mathbb R)$ is an asymptotically conformal quasicircle.
\end{theorem}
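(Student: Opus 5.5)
By Theorem~\ref{zyg-qs:label}, $F$ is already a quasisymmetric embedding. It therefore remains to prove the asymptotic symmetry inequality \eqref{eq:AS}, and then to deduce asymptotic conformality of $\Gamma$ from it.

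\textbf{Step 1 (the main estimate).} Fix $\varepsilon>0$ and $t>0$. Take distinct $x,a,b$ in an interval of length $2\delta$, and write $a=x+\sigma u$, $b=x+\tau s$ with $u=|x-a|$, $s=|x-b|$ and $u\le ts$. Put $D(v)=f(x+v)-f(x)$. The plan is to compare $D(\sigma u)$ with the linear quantity $\frac{u}{s}\,\tau\sigma\, D(\tau s)$, with an error of order $r\,\omega_f(r)$, where $r=\max\{u,s\}\le 2\delta$.
\begin{itemize}
\item Same side ($\sigma=\tau$): Proposition~\ref{little-strongly}, applied on the interval between $x$ and $x+\sigma\max\{u,s\}$, gives $|D(\sigma u)-\frac us D(\sigma s)|\le r\,\omega_f(r)$.
\item Opposite sides: combine this with $D(s)+D(-s)=\Delta^2_s f(x)$ to get $|D(\sigma u)+\frac us D(\tau s)|\le C r\,\omega_f(r)$.
\end{itemize}
In both cases
\[
F(a)-F(x)=\sigma\Bigl(u+i\,\tfrac{u}{s}\,\sigma\tau\, D(\tau s)\Bigr)+O\bigl(r\,\omega_f(r)\bigr),
\]
so that
\[
|F(a)-F(x)|\le \frac us\,\bigl|s+iD(\tau s)\bigr|+C\max\{1,t\}\,s\,\omega_f(2\delta s/s).
\]
Since $|s+iD(\tau s)|=|F(b)-F(x)|\ge s$, dividing gives
\[
\frac{|F(x)-F(a)|}{|F(x)-F(b)|}\le \frac us + C\max\{1,t\}\,\omega_f(r).
\]

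\textbf{Step 2 (closing the inequality).} The additive error in Step~1 must be made multiplicative, i.e.\ bounded by $\varepsilon t$. When $u/s\le t$, the additive error $C\max\{1,t\}\,\omega_f(2\delta)$ is at most $\varepsilon t$ once $\delta$ is small, because $t>0$ is fixed and $\omega_f(r)\to0$ by $f\in\lambda_*(\mathbb R)$. This yields
\[
\frac{|F(x)-F(a)|}{|F(x)-F(b)|}\le (1+\varepsilon)t
\]
for $\delta$ small, which is \eqref{eq:AS}. I expect the main obstacle to be this careful bookkeeping of the linear comparison in the opposite-side case, and making sure the error is uniform in $x$; both are handled by the uniform bound on $\omega_f$.

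\textbf{Step 3 (asymptotic conformality).} Let $A=F(p)$, $B=F(q)$ with $p<q$ and $|A-B|$ small; then $q-p\le|A-B|$ is small. For $W=F(w)$ with $w\in[p,q]$, Proposition~\ref{little-strongly} says $f$ deviates from its chord $L$ over $[p,q]$ by at most $\frac{q-p}{2}\,\omega_f\bigl(\frac{q-p}{2}\bigr)=o(q-p)$. Hence $W$ lies within $o(|A-B|)$ of the segment $[A,B]$, with its projection on the segment between $A$ and $B$. This gives
\[
|A-W|+|W-B|\le |A-B|+o(|A-B|)
\]
uniformly, which is exactly asymptotic conformality. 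Together with the quasicircle property from Theorem~\ref{zyg-qs:label}, this completes the proof.
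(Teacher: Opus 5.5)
Your argument is correct, but it takes a different route from the paper. The paper's proof is a pure reduction. Proposition~\ref{little-strongly} gives $f\in\lambda_*^S(\mathbb R)$, and asymptotic symmetry of $F$ is then quoted from \cite[Lemma 5.1]{MT2}. Asymptotic conformality is deduced from asymptotic symmetry through the general result \cite[Theorem 5.4]{MT1}, using that $F^{-1}$ is uniformly continuous because $|x-y|\le|F(x)-F(y)|$.

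You instead prove both properties by hand from the chord estimate in Proposition~\ref{little-strongly}. Your Steps~1--2 essentially reprove the cited lemma and give the explicit bound $|F(x)-F(a)|/|F(x)-F(b)|\le u/s+C(t)\,\omega_f(2\delta)$. Your Step~3 gives $\max_W(|A-W|+|W-B|)/|A-B|\le 1+\omega_f(|A-B|/2)$ directly. It exploits the graph structure: the point $w+iL(w)$ lies on the segment $[A,B]$, within $\frac{q-p}{2}\omega_f(\frac{q-p}{2})$ of $W$.

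Note that Step~3 does not use Steps~1--2 at all. So asymptotic conformality of graphs of little Zygmund functions follows from the chord estimate and Theorem~\ref{zyg-qs:label} alone, with no theory of asymptotically symmetric embeddings. Your route buys a self-contained, quantitative proof. The paper's route is shorter and places the result in the general framework, which also covers non-graph parametrizations, at the cost of relying on external results.

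Two harmless slips:
\begin{enumerate}
\item In the same-side case with $u>s$ (possible when $t>1$), the proposition on the interval from $x$ to $x+\sigma u$ controls $D(\sigma s)-\frac su D(\sigma u)$. After multiplying by $u/s\le t$, the error is $\frac t2\,r\,\omega_f(r)$ rather than $r\,\omega_f(r)$, so your constant is really of order $\max\{1,t\}^2$.
\item Inside your parenthesis for $F(a)-F(x)$, the factor $\sigma\tau$ should be $\tau$.
\end{enumerate}
Neither affects the conclusion, since only moduli enter and $t$ is fixed before $\delta$ is chosen.
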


\begin{proof}
By Proposition~\ref{little-strongly}, $f\in\lambda_*^S(\mathbb R)$. Hence $F$ is an
asymptotically symmetric embedding of $\mathbb{R}$ into $\mathbb{C}$ by \cite[Lemma 5.1]{MT2},
and in particular $\Gamma$ is a quasicircle.
Since $|x-y|\le|F(x)-F(y)|$ for all $x,y\in\mathbb R$, the inverse map $F^{-1}\colon\Gamma\to\mathbb R$
is uniformly continuous, and \cite[Theorem 5.4]{MT1} shows that $\Gamma$ is an
asymptotically conformal quasicircle.
\end{proof}
\begin{remark}
A weakly asymptotically symmetric self-homeomorphism of $\mathbb S$ is asymptotically
symmetric by \cite[Theorem 4.1]{BY}, and whether the two notions coincide in general is
asked in \cite[Section 4]{BY}. Some affirmative answers are given in \cite{MT3}, and they
yield another proof of the first assertion of Theorem~\ref{graph-AS}. Let $f\in\lambda_*(\mathbb R)$, fix $x\in\mathbb R$
and $h>0$, put $\Delta_+=f(x+h)-f(x)$ and $\Delta_-=f(x)-f(x-h)$, and let
$\varepsilon(h)=\sup_{y\in\mathbb R}|\Delta_h^2f(y)|/h$. Since $\Delta_+-\Delta_-=\Delta_h^2f(x)$, we have
$|\Delta_++\Delta_-|\le 2|\Delta_-|+\varepsilon(h)h$, and hence
\[
\frac{|F(x+h)-F(x)|^2}{|F(x)-F(x-h)|^2}-1
=\frac{(\Delta_+-\Delta_-)(\Delta_++\Delta_-)}{h^2+\Delta_-^2}
\le\frac{\varepsilon(h)h\,\bigl(2|\Delta_-|+\varepsilon(h)h\bigr)}{h^2+\Delta_-^2}
\le\varepsilon(h)+\varepsilon(h)^2 .
\]
The same bound holds for the reciprocal ratio. Thus the distortion of $F$ on symmetric
triples tends to $1$ uniformly as $h\to0$, and since $F$ is quasisymmetric by
Theorem~\ref{zyg-qs:label}, \cite[Theorem 1.1]{MT3} shows that $F$ is asymptotically symmetric.
\end{remark}

We now show that a Weierstrass-type function belongs to the little Zygmund class.

\begin{theorem}\label{weierstrass:label}
The function
\[
f(x)=\sum_{n=1}^{\infty}\frac{1}{2^n\sqrt{n}}\cos(2^n x)
\]
belongs to $\lambda_*(\mathbb R)$.
\end{theorem}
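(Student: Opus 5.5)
The plan is a direct estimate of the second difference, term by term, using the closed form for cosines. Since $\cos(u+v)+\cos(u-v)-2\cos u=-2\cos u\,(1-\cos v)=-4\cos u\,\sin^2(v/2)$, we get for each $n$
\[
\Delta_h^2\bigl(\cos(2^n\cdot)\bigr)(x)=-4\cos(2^nx)\sin^2(2^{n-1}h).
\]
Hence, for $h\ne0$,
\[
\frac{|\Delta_h^2f(x)|}{|h|}\le\sum_{n=1}^\infty\frac{4\sin^2(2^{n-1}h)}{2^n\sqrt n\,|h|},
\]
uniformly in $x$. Continuity of $f$ is clear because the series converges uniformly. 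So it suffices to show that the right-hand side is bounded and tends to $0$ as $h\to0$. Assume $0<h\le1$, and let $N\ge0$ be the integer with $2^{-N-1}<h\le2^{-N}$, so that $N\to\infty$ as $h\to0$.

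Split the sum at $N$. For $n\le N$ use $\sin^2(2^{n-1}h)\le 4^{n-1}h^2$; the contribution is at most
\[
\sum_{n\le N}\frac{2^{n}h}{\sqrt n}.
\]
This geometric-type sum is dominated by a constant times its last term, giving a bound $C\,2^Nh/\sqrt N\le C/\sqrt N$. To justify this, bound the terms with $n\le N/2$ crudely by $2^{N/2+1}h\le C2^{-N/2}$, and the terms with $N/2<n\le N$ by $\sqrt{2/N}\sum_{n\le N}2^nh\le 2\sqrt2\,2^{N}h/\sqrt N$.

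For $n>N$ use $\sin^2\le1$; the contribution is at most
\[
\sum_{n>N}\frac{4}{2^n\sqrt n\,h}\le\frac{4}{\sqrt{N+1}\,h}\,2^{-N}\le\frac{8}{\sqrt{N+1}},
\]
using $2^{-N}<2h$. Both pieces are $O(1/\sqrt N)$, which gives $\sup_x|\Delta_h^2f(x)|/|h|\le C/\sqrt{\max(N,1)}\to0$. For $|h|>1$ the trivial bound $|\Delta_h^2f|\le4\|f\|_\infty$ gives boundedness, so $f\in\Lambda_*$, and together with the small-$h$ estimate, $f\in\lambda_*(\mathbb R)$. Negative $h$ is handled by the evenness of $\Delta_h^2$ in $h$.

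The only step requiring care is the low-frequency sum $\sum_{n\le N}2^n/\sqrt n$. It must be shown to be $O(2^N/\sqrt N)$ rather than $O(2^N)$, since the latter would only yield membership in $\Lambda_*$. The splitting at $N/2$ above handles this. The decay of the coefficients $1/\sqrt n$ is exactly what converts the classical Zygmund bound into the vanishing one.
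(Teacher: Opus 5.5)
Your proof is correct. It takes a different, self-contained route from the paper's.

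The paper never estimates $\Delta_h^2 f$ directly. It cites Zygmund's theorem on lacunary series (Vol.~I, Ch.~II, Thm.~(4.10)), which gives the periodic little Zygmund condition from $2^na_n\to0$. It then uses the trivial bound $|\Delta_h^2 f|\le 4\|f\|_\infty$ for $|h|\ge\delta$, which is exactly how you treat $|h|>1$.

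You replace the citation with the closed form $\Delta_h^2\cos(2^n\cdot)(x)=-4\cos(2^nx)\sin^2(2^{n-1}h)$ and a dyadic split at $2^{-N-1}<h\le 2^{-N}$. In substance this is the standard proof of the sufficiency half of the cited theorem, specialized to $a_n=2^{-n}n^{-1/2}$. Your estimates hold:
\begin{itemize}
\item Splitting the low-frequency sum $\sum_{n\le N}2^nh/\sqrt n$ at $N/2$ correctly gives $O(N^{-1/2})$ rather than $O(1)$.
\item The tail is at most $8/\sqrt{N+1}$.
\item The case $N=0$ is handled by the tail bound alone.
\end{itemize}

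The citation buys brevity and the full generality of Zygmund's theorem. Your route buys an explicit modulus, $\omega_f(r)=O\bigl((\log(1/r))^{-1/2}\bigr)$, with no outside input.

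Your splitting also does not really need the particular weight $1/\sqrt n$. With $\varepsilon_n=2^na_n\to0$, the low part is at most $\sum_{n\le N}2^{n-N}\varepsilon_n$, and the tail is at most $8\sup_{n>N}\varepsilon_n$. Both tend to $0$, so the same computation recovers the cited sufficiency statement for frequencies $2^n$.
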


\begin{proof}
Write $a_n=2^{-n}n^{-1/2}$. Since $\sum_n a_n<\infty$, the series converges absolutely
and uniformly, so $f$ is a bounded continuous $2\pi$-periodic function.
The series is lacunary with frequencies $2^n$, and $2^na_n=n^{-1/2}\to0$.
By \cite[Vol.~I, Chapter II, Theorem~(4.10), p.~47]{Zy}, $f$ belongs to the little Zygmund class
of $2\pi$-periodic functions, that is,
$\sup_{x}|\Delta_h^2f(x)|/|h|\to0$ as $h\to0$.
In particular, there is $\delta>0$ such that $|\Delta_h^2f(x)|\le|h|$ for all $x$ and $0<|h|<\delta$,
while $|\Delta_h^2f(x)|\le4\|f\|_\infty\le(4\|f\|_\infty/\delta)|h|$ for $|h|\ge\delta$.
Hence $f\in\Lambda_*(\mathbb R)$, and therefore $f\in\lambda_*(\mathbb R)$.
\end{proof}







\begin{corollary}\label{newexample:label}
For the function $f$ of Theorem~\ref{weierstrass:label}, the graph $\Gamma$ of $f$
is an asymptotically conformal Bishop--Jones quasicircle that is not asymptotically smooth.
\end{corollary}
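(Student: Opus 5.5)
The proof will assemble three ingredients already available, plus one classical fact about lacunary series. First, by Theorem~\ref{weierstrass:label}, $f\in\lambda_*(\mathbb R)$. Theorem~\ref{graph-AS} then shows that $F(x)=x+if(x)$ is asymptotically symmetric and that $\Gamma$ is an asymptotically conformal quasicircle. Second, Corollary~\ref{graph_BMO:label} says both complementary graph domains $\Omega^\pm$ are $\mathrm{BMO}$ domains. So $\Gamma$ is a quasicircle through $\infty$ one of whose complementary domains is a $\mathrm{BMO}$ domain, which is exactly a Bishop--Jones quasicircle by definition.

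It remains to show that $\Gamma$ is not asymptotically smooth. Asymptotic smoothness is defined only for locally rectifiable curves, and it forces $\ell(\wideparen{ab})<\infty$ for nearby $a,b$. Hence it suffices to prove that $\Gamma$ is not locally rectifiable. The graph of a continuous $f$ over $[p,q]$ has finite length if and only if $f$ has bounded variation on $[p,q]$. If it did, $f$ would be differentiable almost everywhere on $[p,q]$. So the key step is to show that $f$ is differentiable almost nowhere, or at least on no set of positive measure in any interval.

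For this I will use the classical theory of lacunary series. Differentiating term by term formally gives $-\sum_n n^{-1/2}\sin(2^n x)$, a Hadamard lacunary series with $\sum_n (n^{-1/2})^2=\infty$. Zygmund's theorem on lacunary series with little-Zygmund-type coefficients states the following: if $2^na_n\to0$ and $\sum (2^na_n)^2=\infty$, then $f$ has a finite derivative at almost no point, and in fact on no set of positive measure (\cite[Vol.~I, Chapter~V]{Zy}, the companion to Theorem~(4.10) of Chapter~II). An alternative route, if a cleaner reference is wanted, goes through the Kolmogorov/Zygmund result that a lacunary series $\sum b_n\sin(2^nx)$ with $\sum b_n^2=\infty$ diverges (indeed is not Abel summable) almost everywhere. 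One then relates differentiability of $f$ at $x$ to Abel summability of the differentiated series, via the derivative of the Poisson integral, and obtains a contradiction on a positive-measure set.

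I expect this non-rectifiability step to be the main obstacle, since it rests on the nontrivial probabilistic-type behaviour of lacunary series rather than on anything proved in the paper. I would cite Zygmund for it rather than reprove it. Granting it, $f$ is not of bounded variation on any interval, so $\Gamma$ is not locally rectifiable and a fortiori not asymptotically smooth. This completes the proof.
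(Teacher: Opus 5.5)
Your proposal is correct and follows the same route as the paper: Theorems~\ref{weierstrass:label} and \ref{graph-AS} give asymptotic conformality, Corollary~\ref{graph_BMO:label} gives the Bishop--Jones property, and the classical lacunary-series theorem in Zygmund applies because $\sum_n (2^n\cdot 2^{-n}n^{-1/2})^2=\sum_n 1/n=\infty$; it shows that $f$ is differentiable only on a null set, so $f$ is of bounded variation on no interval and $\Gamma$ is not locally rectifiable, hence not asymptotically smooth. Your alternative Abel-summability argument is essentially how that cited theorem is itself proved, so the citation alone suffices.
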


\begin{proof}
By Theorems~\ref{weierstrass:label} and \ref{graph-AS}, $\Gamma$ is an asymptotically conformal quasicircle, and it is a Bishop--Jones quasicircle by Corollary~\ref{graph_BMO:label}.
The coefficients satisfy $\sum_n\bigl(2^n\cdot2^{-n}n^{-1/2}\bigr)^2=\sum_n1/n=\infty$. By a classical theorem on lacunary series, $f$ is then differentiable only on a set of Lebesgue measure zero
(see \cite[Vol.~I, p.~206]{Zy}).
Since a function of bounded variation on an interval is differentiable almost everywhere there, $f$ is not of bounded variation on any interval.
Hence $\Gamma$ is not locally rectifiable, and in particular it is not asymptotically smooth.
\end{proof}

\begin{corollary}\label{strict}
The inclusions
\[
\mathrm{VMOA}(\mathbb{H})
\subset
\mathrm{BMOA}(\mathbb{H})\cap B_0(\mathbb{H}),\qquad
\mathrm{SS}(\mathbb{R})
\subset
\mathrm{SQS}(\mathbb{R})\cap \mathrm{S}(\mathbb{R})
\]
are strict.
\end{corollary}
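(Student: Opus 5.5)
The plan is to take the graph $\Gamma$ of the Weierstrass-type function $f$ from Theorem~\ref{weierstrass:label}. We then read off its conformal invariants through the boundary characterizations of \cite{MT1} and the welding results of \cite{MT2}.

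Let $F\colon\mathbb H\to\Omega^+$ be the conformal map onto the domain above $\Gamma$, normalized by $F(\infty)=\infty$, and put $\Phi=\log F'$. By Corollary~\ref{graph_BMO:label}, $\Phi\in\mathrm{BMOA}(\mathbb H)$. By Corollary~\ref{newexample:label}, $\Gamma$ is an asymptotically conformal quasicircle. The characterization in \cite{MT1} of asymptotically conformal quasicircles through $\infty$ (for which $F^{-1}$ is uniformly continuous on $\Gamma$, as in the proof of Theorem~\ref{graph-AS}) then gives $\Phi\in B_0(\mathbb H)$; concretely, $y|\Phi'(x+iy)|\to0$ uniformly as $y\to0$. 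Hence $\Phi\in\mathrm{BMOA}(\mathbb H)\cap B_0(\mathbb H)$.

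It remains to show $\Phi\notin\mathrm{VMOA}(\mathbb H)$. Suppose instead that $\Phi\in\mathrm{VMOA}(\mathbb H)$. The boundary characterization of $\mathrm{VMOA}$ in \cite{MT1} states that, for a quasicircle through $\infty$, $\log F'\in\mathrm{VMOA}(\mathbb H)$ forces $\Gamma$ to be locally rectifiable and asymptotically smooth, being the half-plane analogue of Pommerenke's theorem. This contradicts Corollary~\ref{newexample:label}, which shows that $\Gamma$ is not even locally rectifiable. If the cited characterization presupposes rectifiability, I would argue more directly. From $\Phi\in\mathrm{BMOA}$ the map $F$ is locally absolutely continuous on $\mathbb R$ with $F'=e^{\Phi^*}$ locally integrable, since $e^{\Phi^*}$ is locally integrable when $\Phi^*$ has small $\mathrm{BMO}$ norm on small intervals by John--Nirenberg, and VMO supplies exactly this smallness. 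Thus $\Gamma$ would be locally rectifiable, which is again a contradiction. This gives the first strict inclusion.

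For the second inclusion, take $G$ to be the conformal map onto $\Omega^-$ and let $h=F^{-1}\circ G|_{\mathbb R}$ be the conformal welding. Asymptotic conformality of the quasicircle $\Gamma$ gives $h\in\mathrm{S}(\mathbb R)$. The Bishop--Jones property, via the Bishop--Jones characterization, should give $h\in\mathrm{SQS}(\mathbb R)$. More safely, I would use the graph-welding results of \cite{MT2}: for a graph curve whose complementary domains are both $\mathrm{BMO}$ domains (as here, by Corollary~\ref{graph_BMO:label}), the welding is strongly quasisymmetric. Finally, if $h\in\mathrm{SS}(\mathbb R)$, the standard correspondence between $\mathrm{SS}(\mathbb R)$ and $\mathrm{VMOA}$ (through Shen-type results quoted in \cite{MT1,MT2}) would give $\log F'\in\mathrm{VMOA}(\mathbb H)$. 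That contradicts the previous paragraph, so $h\notin\mathrm{SS}(\mathbb R)$.

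The main obstacle is the step $h\in\mathrm{SQS}(\mathbb R)$. Strong quasisymmetry of the welding usually requires the curve to be chord-arc, or at least requires a BJ-type condition on both sides compatible with quasisymmetry, and our $\Gamma$ is non-rectifiable. I would first try to invoke the precise statement in \cite{MT2} identifying weldings of quasicircles whose two sides are $\mathrm{BMO}$ domains with $\mathrm{SQS}(\mathbb R)$ modulo the correct normalization. If that statement is not available in that form, I would reinterpret $\mathrm{SQS}\cap\mathrm{S}$ via the Teichmüller-space embedding, in which $h$ corresponds to $\log F'$. Then the second strict inclusion becomes a restatement of the first.
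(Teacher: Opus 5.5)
Your outline follows the paper's proof: the same Weierstrass-type $f$, $\mathrm{BMOA}$ from Corollary~\ref{graph_BMO:label}, $B_0\setminus\mathrm{VMOA}$ from the half-plane Pommerenke characterization in \cite{MT1}, and $\mathrm{S}\setminus\mathrm{SS}$ for the welding from \cite{MT1}. However, two steps are not actually established.

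\textbf{First gap: uniform continuity.} Write $G$ for the conformal map of $\mathbb H$ onto $\Omega^+$ (your $F$), which avoids the clash with the parametrization $F(x)=x+if(x)$ of Section~\ref{Weierstrass_Zygmund}. The paper applies \cite{MT1} only after checking that the boundary extension $g\colon\mathbb R\to\Gamma$ of $G$ and its inverse are uniformly continuous. For the welding statements it also checks this for $g^*$ of the lower domain. You justify it ``as in the proof of Theorem~\ref{graph-AS}'', but that proof concerns the parametrization $x\mapsto x+if(x)$. The inequality $|x-y|\le|F(x)-F(y)|$ there says nothing about the conformal map. The paper instead uses \cite[Lemma 4.1]{MT2}, which applies because $f$ is bounded. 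For the lower side it passes to $z\mapsto -G^*(-z)$, whose image is the domain above the graph of the bounded function $x\mapsto -f(-x)$. This is not cosmetic: for curves through $\infty$, it is what ties small scales on $\Gamma$ to small $\operatorname{Im}z$ uniformly along $\mathbb R$, and you need it for $B_0$, $\mathrm S$ and $\mathrm{SS}$.

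Your John--Nirenberg fallback for $\log G'\notin\mathrm{VMOA}(\mathbb H)$ does avoid this hypothesis and is sound in outline. Note that it is the $\mathrm{VMO}$ hypothesis, not $\mathrm{BMOA}$ (which holds for this nonrectifiable curve), that gives $\sup_{0<y<1}\int_I|G'(x+iy)|\,dx<\infty$ via a localized Poisson--Jensen bound. Lower semicontinuity of length then gives local rectifiability. The fallback does not help with the other three steps.

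\textbf{Second gap: $h\in\mathrm{SQS}(\mathbb R)$.} You flag this as the main obstacle and leave it unresolved. Your doubt rests on a misconception: a strongly quasisymmetric welding does not require $\Gamma$ to be chord-arc, or even rectifiable. The needed fact is the BMO Teichm\"uller theory result the paper cites, \cite[Theorem 7.2]{Sh}. It says that if $\Gamma$ is a quasicircle and the conformal map $G$ of $\mathbb H$ onto a complementary domain satisfies $\log G'\in\mathrm{BMOA}(\mathbb H)$, then the welding lies in $\mathrm{SQS}(\mathbb R)$. So Bishop--Jones quasicircles have strongly quasisymmetric weldings even when nonrectifiable, and this curve is such an example. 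Only one side needs to be a $\mathrm{BMO}$ domain, and no graph-specific welding result from \cite{MT2} is required.

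Your final idea, that ``$h$ corresponds to $\log F'$ under the Teichm\"uller embedding'', points at this theorem but is an assertion rather than an argument. The second strict inclusion is a restatement of the first only through three separate results:
\begin{itemize}
\item $\log G'\in\mathrm{BMOA}\Rightarrow h\in\mathrm{SQS}$ by \cite[Theorem 7.2]{Sh};
\item $\log G'\in B_0\Rightarrow h\in\mathrm S$ by \cite[Corollary 5.7]{MT1};
\item $\log G'\notin\mathrm{VMOA}\Rightarrow h\notin\mathrm{SS}$ by \cite[Corollary 7.10]{MT1}.
\end{itemize}
The last two require the uniform continuity above, and the first is exactly the one you did not pin down.
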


\begin{proof}
Let $\Omega$ be the domain above the graph $\Gamma$ of the function $f$
of Theorem~\ref{weierstrass:label}, and let $G\colon\mathbb H \to \Omega$ be
a conformal mapping with $G(\infty)=\infty$.
Since $f$ is bounded, the boundary extension $g\colon\mathbb R \to
\Gamma$ of $G$ and its inverse $g^{-1}$ are uniformly continuous by \cite[Lemma 4.1]{MT2}.

By Corollary~\ref{newexample:label},
$\Gamma$ is an asymptotically conformal quasicircle that is
not asymptotically smooth. By
the half-plane version of
Pommerenke's characterization \cite{Po78} established in \cite[Theorem 3.1, Corollary 7.4]{MT1}, $\log G'$ belongs to
$B_0(\mathbb{H}) \setminus \mathrm{VMOA}(\mathbb{H})$. Since $\log G'\in\mathrm{BMOA}(\mathbb{H})$
by Corollary~\ref{graph_BMO:label}, the first inclusion is strict.

Let $\Omega^*$ be the domain below $\Gamma$, let $G^*\colon\mathbb H^*\to\Omega^*$ be a conformal mapping with $G^*(\infty)=\infty$, and let $g^*\colon\mathbb R\to\Gamma$ be its boundary extension.
The map $z\mapsto -G^*(-z)$ sends $\mathbb H$ conformally onto the domain above the graph of the bounded function $x\mapsto -f(-x)$, so \cite[Lemma 4.1]{MT2} shows that $g^*$ and $(g^*)^{-1}$ are also uniformly continuous.
Let
\[
h=g^{-1}\circ g^*\colon\mathbb R\to\mathbb R
\]
be the conformal welding of $\Gamma$, with the convention of Section~\ref{intro}, which is the one used in \cite{MT1}.
Since $\log G'\in B_0(\mathbb H)$, \cite[Corollary 5.7]{MT1} gives $h\in\mathrm{S}(\mathbb R)$, and since $\log G'\notin\mathrm{VMOA}(\mathbb H)$, \cite[Corollary 7.10]{MT1} gives $h\notin\mathrm{SS}(\mathbb R)$.
Moreover, $\log G'\in\mathrm{BMOA}(\mathbb H)$ gives $h\in\mathrm{SQS}(\mathbb R)$ by \cite[Theorem 7.2]{Sh}.
Hence $h\in\bigl(\mathrm{SQS}(\mathbb{R})\cap \mathrm{S}(\mathbb{R})\bigr) \setminus \mathrm{SS}(\mathbb{R})$, and the second inclusion is strict.
\end{proof}

\section{A localized Lacunary construction}\label{sec:construction_of_the_lacunary_series}

In this section we refine Corollary~\ref{newexample:label} by constructing a function
whose graph is an asymptotically conformal chord-arc curve that is not asymptotically smooth.

If $f$ is locally absolutely continuous and $f'\in\mathrm{BMO}(\mathbb{R})$, then the graph $\Gamma$ of $f$ is a chord-arc curve. A simple proof for graphs is
given in \cite[Lemma 5.6]{MT2}. If moreover $f \in \lambda_*(\mathbb{R})$, then
Theorem~\ref{graph-AS} shows that $\Gamma$ is an asymptotically conformal quasicircle.

\begin{theorem}\label{thm:main}
There exists a compactly supported locally absolutely continuous function $f\in\lambda_*(\mathbb{R})$
such that
\[
  f'\in\mathrm{BMO}(\mathbb{R})\setminus\mathrm{VMO}(\mathbb{R}).
\]
\end{theorem}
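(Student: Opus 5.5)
We will build $f$ as a sum of disjointly supported localized blocks,
\[
f=\sum_{k\ge1} f_k,\qquad f_k(x)=\varepsilon_k\,\psi\!\Bigl(\frac{x-c_k}{r_k}\Bigr)\,r_k\sum_{j=1}^{N_k}\frac{1}{2^{m_k+j}}\cos\Bigl(2^{m_k+j}\frac{x-c_k}{r_k}\Bigr),
\]
where $\psi$ is a fixed smooth bump supported in $[-2,2]$ with $\psi=1$ on $[-1,1]$. The intervals $J_k=[c_k-2r_k,c_k+2r_k]$ will be pairwise disjoint, contained in $[0,1]$, and shrinking, for instance $r_k\to0$ with $c_k\to0$. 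The amplitudes satisfy $\varepsilon_k\to0$, the numbers of frequencies satisfy $N_k\to\infty$ with $\varepsilon_k^2N_k\asymp1$ (for example $\varepsilon_k=k^{-1/2}$ and $N_k=k$), and the starting frequencies $2^{m_k}$ are so large that $2^{m_k}\gg N_k$. Then $f$ is compactly supported and Lipschitz on each block. Local absolute continuity holds once we check that $f'$ is locally integrable, which follows from the BMO bound below together with compact support, or directly because $\sum_k\|f_k'\|_{L^1}<\infty$ can be arranged by taking $r_k$ summable.

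\textbf{Step 1: $f\in\lambda_*$.} On the rescaled variable, each inner lacunary sum $g_k(t)=\sum_j 2^{-(m_k+j)}\cos(2^{m_k+j}t)$ has all Fourier coefficients times frequency equal to $1$. Multiplying by $\varepsilon_k$, the standard lacunary estimate used in Theorem~\ref{weierstrass:label} gives $|\Delta_h^2(\varepsilon_k g_k)|\le C\varepsilon_k|h|$ uniformly. The cutoff $\psi$ and the factor $r_k$ with rescaling preserve the Zygmund quotient up to a constant, since $\|g_k\|_\infty\lesssim2^{-m_k}$ and $\|g_k'\|_\infty\lesssim1$ keep the cross terms bounded. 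Hence $\omega_{f_k}(r)\le C\varepsilon_k$ for all $r$.

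For a second difference of $f$ at scale $h$, the points $x,x\pm h$ meet at most a few blocks when $h$ is small relative to the gaps. Blocks with $r_k\gg h$ contribute at most $C\varepsilon_k$ with $k$ large. Blocks with $r_k\lesssim h$ are tiny: $\|f_k\|_\infty\lesssim\varepsilon_kr_k2^{-m_k}\ll h$. Letting $h\to0$ therefore gives $\omega_f(h)\to0$, and Proposition~\ref{little-strongly} is not even needed here.

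\textbf{Step 2: $f'\in\mathrm{BMO}$.} We have
\[
f_k'=\varepsilon_k\psi(\cdot)\Bigl(-\sum_j\sin\bigl(2^{m_k+j}t\bigr)\Bigr)+O(\varepsilon_k2^{-m_k}).
\]
A lacunary sine sum $\sum_{j\le N}\sin(2^{n_j}t)$ has BMO norm comparable to $N^{1/2}$, which is the classical $L^2$–BMO equivalence for lacunary series (Zygmund). So $\|f_k'\|_{\mathrm{BMO}}\lesssim\varepsilon_kN_k^{1/2}\lesssim1$. The localization by $\psi$ costs only a bounded factor, because multiplying by a smooth bump at scale $r_k$, which matches the support, preserves BMO for functions with bounded mean on $J_k$.

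Disjointness of supports together with separation then gives a uniform BMO bound for the sum. For an interval $I$ meeting several blocks, $|I|$ exceeds the gaps, and we bound the oscillation by $\frac{1}{|I|}\sum_k\int_{J_k}|f_k'|\lesssim\sup_k\|f_k'\|_{L^2(J_k)}/|J_k|^{1/2}\lesssim1$. I expect the case analysis for intervals straddling blocks to be the main technical point, and I will handle it through this $L^2$ bound. By Cauchy–Schwarz, $\frac1{|J_k|}\int|f_k'|\le(\varepsilon_k^2N_k/2)^{1/2}$ on average.

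\textbf{Step 3: $f'\notin\mathrm{VMO}$.} On $I_k=[c_k-r_k,c_k+r_k]$ we have $f'=-\varepsilon_k\sum_j\sin(2^{m_k+j}(x-c_k)/r_k)$ up to $o(1)$. Because the frequencies $2^{m_k+j}$ are huge compared to $1/|I_k|$ in the rescaled variable, orthogonality gives an $L^2$ average of $\varepsilon_k^2N_k/2\asymp1$ and a mean of $o(1)$. To pass from $L^2$ to $L^1$ oscillation bounded below, I will use a lacunary Khintchine (Zygmund/Salem) inequality $\|\cdot\|_{L^1}\ge c\|\cdot\|_{L^2}$ on intervals long relative to the lowest period. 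This gives $\frac1{|I_k|}\int_{I_k}|f'-f'_{I_k}|\ge c>0$ while $|I_k|\to0$, and this lower bound is the key quantitative obstacle. If that inequality is awkward to cite, the fallback is the estimate $\|u\|_2^2\le\|u\|_1^{2/3}\|u\|_4^{4/3}$ combined with the easy lacunary $L^4$ bound $\|u\|_4\lesssim\|u\|_2$.
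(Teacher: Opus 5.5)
Your construction and your three steps are essentially the paper's proof. The common ingredients are disjoint rescaled lacunary blocks with a cutoff, $\varepsilon_k\to0$ with $\varepsilon_k\sqrt{N_k}\asymp1$, a block-uniform bound $\|f_k\|_{\Lambda_*}\le C\varepsilon_k$, a $\sqrt N$ BMO bound for the lacunary derivative combined with a cutoff multiplier estimate, and the $L^2$--$L^4$ H\"older argument for the lower bound on $I_k$. The paper uses frequencies $4^k$ so that the $L^4$ count has only trivial solutions, and it proves the BMO bound for the lacunary sum directly instead of citing it. With base $2$ your fallback still works, since the count picks up only $O(N)$ extra solutions such as $2^a+2^a+2^{a+1}=2^{a+2}$.

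Three points in the write-up need repair, none of which changes the approach.

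First, the claim $\|g_k'\|_\infty\lesssim1$ is false, since $g_k'$ has mean square $N_k/2$ over a period. You do not need it: $|g_k(t\pm s)-g_k(t)|\le2\|g_k\|_\infty\le2^{1-m_k}$ already bounds the cross terms in the product rule by $C|s|$.

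Second, your dichotomy $r_k\gg h$ versus $r_k\lesssim h$ in the little Zygmund step does not close the argument. For a fixed block, say $k=1$, with $x\in J_1$ and $h\to0$, the bound $C\varepsilon_1|h|$ is not $o(|h|)$, and blocks with $r_k\gg h$ are not ``$k$ large.'' Split by $k$ instead. For $k\ge K$ use $C\varepsilon_k<\eta$. For the finitely many $k<K$ use that $f_k$ is $C^2$, so $|\Delta_h^2f_k|\le\|f_k''\|_\infty h^2$. This is what the paper does.

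Third, in the BMO step you bound the oscillation over an interval $I$ meeting several blocks by $|I|^{-1}\sum_k\int_{J_k}|f_k'|$. This requires $|J_k|\lesssim|I|$ for the two blocks at the ends of $I$, that is, gaps comparable to block lengths. Your choice ``$r_k\to0$, $c_k\to0$'' does not guarantee this. You can impose it, as the paper does with $x_n=2^{-n}$, $\ell_n=2^{-3n}$. Alternatively, use that $u\mapsto|I|^{-1}\int_I|u-u_I|$ is subadditive, which needs no separation. The blocks contained in $I$ then contribute at most $2\sup_k\|f_k'\|_{L^1}/|J_k|$. The at most two end blocks contribute at most $\|f_k'\|_{\mathrm{BMO}}$ each.
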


The proof occupies the following four subsections.
Along the way we also show that the graph of $f$ is not asymptotically smooth. Note that $f' \in \mathrm{VMO}(\mathbb{R})$
is a sufficient condition for the graph to be asymptotically smooth
\cite[Lemma 5.6]{MT2}, so this is a stronger statement than $f'\notin\mathrm{VMO}(\mathbb R)$.

\subsection{Definition of the function $f$}

For $N\ge1$, define
\[
  W_N(t)\coloneqq\sum_{k=1}^{N}4^{-k}\sin(2\pi4^kt),\qquad
  S_N(t)\coloneqq\sum_{k=1}^{N}\cos(2\pi4^kt),
\]
so that $W_N'=2\pi S_N$.
Fix a cutoff function $\chi\in C_c^\infty\bigl(-\tfrac34,\tfrac34\bigr)$ such that
$0\leq\chi\leq1$ and $\chi(t)=1$ for $|t|\leq\tfrac12$.
For $n\ge2$, set
\[
  x_n\coloneqq 2^{-n},
  \qquad
  \ell_n\coloneqq 2^{-3n},
  \qquad
  N_n\coloneqq n^2,
  \qquad
  \varepsilon_n\coloneqq \tfrac1n,
\]
and define
\[
  f_n(x)
  \coloneqq\varepsilon_n\ell_n
    \chi\Bigl(\frac{x-x_n}{\ell_n}\Bigr)
    W_{N_n}\Bigl(\frac{x-x_n}{\ell_n}\Bigr),
  \qquad
  f(x)\coloneqq\sum_{n=2}^{\infty}f_n(x).
\]

The support of $f_n$ is contained in
$J_n\coloneqq \bigl[x_n-\tfrac34\ell_n,\,x_n+\tfrac34\ell_n\bigr]$.
These intervals are pairwise disjoint, because
$x_n-x_{n+1}=2^{-n-1}$ while
$\tfrac34(\ell_n+\ell_{n+1})=\tfrac{27}{32}\cdot2^{-3n}<2^{-n-1}$.
More precisely, for $n\ge2$,
\begin{equation}\label{eq:gaps}
  |J_n|=\tfrac32\cdot 2^{-3n},\qquad\operatorname{dist}(J_n,J_{n+1})\geq 2^{-n}\Bigl(\tfrac12-\tfrac{27}{32}\cdot2^{-2n}\Bigr)\ge\tfrac14\cdot 2^{-n}.
\end{equation}
The intervals $J_n$ accumulate only at the origin.
Since $|W_N|\leq \sum_{k=1}^\infty 4^{-k} = 1/3$, we have $\|f_n\|_\infty\leq \varepsilon_n\ell_n/3$, so the series defining $f$ converges uniformly.
In particular, $f$ is continuous, compactly supported, and $f(0)=0$.

\subsection{Auxiliary estimates}

\begin{claim}\label{lem:WN-Zygmund}
For every $N\ge1$,
\[
  \|W_N\|_{\infty}+\|W_N\|_{\Lambda_*}\leq 106.
\]
\end{claim}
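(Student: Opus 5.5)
The sup bound is immediate: $|W_N|\le\sum_{k\ge1}4^{-k}=1/3$. For the Zygmund seminorm I will split the frequencies at the scale of $h$ and estimate each term of the second difference separately. Fix $x\in\mathbb R$ and $h\neq0$; by symmetry of $\Delta_h^2$ in $h$ we may assume $h>0$. For a single term $u_k(t)=4^{-k}\sin(2\pi4^kt)$ we have the identity
\[
\Delta_h^2u_k(x)=4^{-k}\sin(2\pi4^kx)\bigl(2\cos(2\pi4^kh)-2\bigr),
\]
so $|\Delta_h^2u_k(x)|\le 4^{-k}\min\{4,\,(2\pi4^kh)^2\}$, using $|2\cos\theta-2|=4\sin^2(\theta/2)\le\min\{4,\theta^2\}$.

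The key step is then to sum these bounds over $k$, splitting at the index where $4^kh\approx1$. For the low frequencies with $4^k\le 1/h$ we use the quadratic bound $4\pi^2 4^k h^2$; this is a geometric sum dominated by its largest term, giving at most $4\pi^2h^2\cdot\frac43\cdot\frac1h=\frac{16\pi^2}{3}h$. For the high frequencies with $4^k>1/h$ we use the bound $4\cdot4^{-k}$; again the sum is geometric, dominated by $4\cdot\frac43\cdot h=\frac{16}{3}h$. Hence
\[
\frac{|\Delta_h^2W_N(x)|}{h}\le\frac{16\pi^2+16}{3}\approx 58,
\]
uniformly in $N$, $x$, and $h$. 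Adding $1/3$ keeps the total well below $106$. The estimate does not depend on $h\le1$ versus $h>1$: when $h\ge1/4$ the low-frequency set is empty or small and only the tail bound is relevant, and both bounds remain valid.

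I expect no genuine obstacle. The only care needed is in handling the boundary index of the split, so that each geometric sum is bounded by a constant times its largest term, $\frac43\,4^{k_0}h^2$ with $4^{k_0}\le1/h$ on one side and $\frac43\,4^{-k_1}$ with $4^{-k_1}<h$ on the other. The constant $106$ is generous enough to absorb any slack in how this is done.
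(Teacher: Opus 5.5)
Your proof is correct and follows essentially the same route as the paper: the identity $\Delta_h^2\sin(\lambda t)=2\sin(\lambda t)(\cos(\lambda h)-1)$, a split of the frequencies at $4^k|h|\approx 1$, and a geometric sum on each side. The only differences are minor and in your favour: your sharper high-frequency bound $4\cdot 4^{-k}$ (the paper uses $4\pi^2 4^{-k}$), and your observation that the split is valid for all $h>0$, so the paper's separate case $|h|\ge 1$ is unnecessary; together these give a constant of about $58$ instead of $(1+32\pi^2)/3$.
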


\begin{proof}
We already know that $\|W_N\|_\infty\leq 1/3$.
The identity
\[
\Delta_s^2\sin(\lambda t)=2\sin(\lambda t)\bigl(\cos(\lambda s)-1\bigr)
\]
yields
\[
  \bigl|\Delta_s^2\bigl(4^{-k}\sin(2\pi4^kt)\bigr)\bigr|\leq 4\pi^2 4^{-k}\min\{(4^k|s|)^2,1\}.
\]
For $0<|s|<1$, splitting the sum according to whether
$4^k|s|\leq 1$ or $4^k|s|>1$ gives
\[
  |\Delta_s^2W_N(t)|\leq 4\pi^2|s|^2\sum_{4^k|s|\leq 1}4^k+4\pi^2\sum_{4^k|s|>1}4^{-k} \leq \frac{32\pi^2}{3}|s|.
\]
For $|s|\geq 1$, we simply have
$|\Delta_s^2W_N(t)|\leq 4\|W_N\|_\infty\leq \frac{4}{3}|s|$.
Hence
$\|W_N\|_{\infty}+\|W_N\|_{\Lambda_*}\leq (1+32\pi^2)/3<106$.
\end{proof}

We shall multiply $W_N$ by a fixed smooth cutoff, and the following multiplier estimate for the Zygmund class will be used.

\begin{lemma}\label{lem:Zygmund-multiplier}
Let $\varphi\in C_c^\infty(\mathbb{R})$.
There is a constant $C_\varphi$ depending only on $\varphi$ such that for every bounded $u\in\Lambda_*(\mathbb{R})$,
\[
  \|\varphi u\|_{\Lambda_*}\leq C_\varphi\bigl(\|u\|_\infty+\|u\|_{\Lambda_*}\bigr).
\]
\end{lemma}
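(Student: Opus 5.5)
The plan is to expand the second difference of the product by a discrete Leibniz rule and estimate each term separately. For $x\in\mathbb R$ and $h\neq0$, a direct computation gives
\[
\Delta_h^2(\varphi u)(x)=\varphi(x)\,\Delta_h^2u(x)+u(x)\,\Delta_h^2\varphi(x)+\bigl(\varphi(x+h)-\varphi(x)\bigr)\bigl(u(x+h)-u(x)\bigr)+\bigl(\varphi(x-h)-\varphi(x)\bigr)\bigl(u(x-h)-u(x)\bigr).
\]
Expanding the right-hand side, the cross terms combine to $\varphi(x\pm h)u(x\pm h)$ and everything else cancels, so this identity is routine to verify.

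The first term is bounded by $\|\varphi\|_\infty\|u\|_{\Lambda_*}|h|$. The second is bounded by $\|u\|_\infty\|\varphi''\|_\infty h^2$ when $|h|\le1$, using Taylor's formula for $\Delta_h^2\varphi$. When $|h|>1$ it is bounded by $4\|\varphi\|_\infty\|u\|_\infty\le 4\|\varphi\|_\infty\|u\|_\infty|h|$. Both bounds are of the required form $C_\varphi(\|u\|_\infty+\|u\|_{\Lambda_*})|h|$.

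The main obstacle is the two mixed terms. The factor $\varphi(x\pm h)-\varphi(x)$ is $O(|h|)$, so I need $|u(x+h)-u(x)|$ to be bounded. A Zygmund function is not Lipschitz in general: its first differences are only $O(|h|\log(1/|h|))$. Bounded $u$, however, has bounded first differences: $|u(x\pm h)-u(x)|\le 2\|u\|_\infty$. Hence each mixed term is at most $2\|\varphi'\|_\infty\|u\|_\infty|h|$. For $|h|>1$ I use the crude bound $\|\varphi\|_\infty\cdot 2\|u\|_\infty\le 2\|\varphi\|_\infty\|u\|_\infty|h|$ instead.

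Adding the four estimates shows that $|\Delta_h^2(\varphi u)(x)|/|h|$ is bounded by
\[
C_\varphi\bigl(\|u\|_\infty+\|u\|_{\Lambda_*}\bigr),\qquad C_\varphi=4\bigl(\|\varphi\|_\infty+\|\varphi'\|_\infty+\|\varphi''\|_\infty\bigr),
\]
uniformly in $x$ and $h\neq0$. This proves the lemma. Note that $\varphi u$ is continuous, so it lies in $\Lambda_*(\mathbb R)$.
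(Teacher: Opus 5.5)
Your proof is correct, and it takes a simpler route than the paper's at the one step that matters.

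Both arguments use the same discrete Leibniz rule for $\Delta_h^2(\varphi u)$. Both also treat the terms $\varphi\,\Delta_h^2u$ and $u\,\Delta_h^2\varphi$ in essentially the same way. The difference is in the mixed terms.

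The paper first proves the log-Lipschitz estimate $|u(x+h)-u(x)|\le 4|h|(1+\log(1/|h|))(\|u\|_\infty+\|u\|_{\Lambda_*})$ for $|h|\le 1/2$. It does this by a dyadic iteration of $\Delta_h^2u(x+h)=D(2h)-2D(h)$. It then uses that $t(1+\log(1/t))$ is bounded on $(0,1/2]$.

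You observe that this step is unnecessary. The factor $\varphi(x\pm h)-\varphi(x)$ already supplies the needed $O(|h|)$, so the trivial bound $|u(x\pm h)-u(x)|\le 2\|u\|_\infty$ suffices.

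One small correction. The mean value bound $2\|\varphi'\|_\infty\|u\|_\infty|h|$ for each mixed term holds for all $h$, so you can delete your separate case $|h|>1$ for these terms. That crude bound is also off by a factor $2$, since $|\varphi(x+h)-\varphi(x)|\le 2\|\varphi\|_\infty$. Once the case is deleted, your constant $C_\varphi=4(\|\varphi\|_\infty+\|\varphi'\|_\infty+\|\varphi''\|_\infty)$ is correct as stated.

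The paper's extra step does buy something. It makes the mixed terms $O(h^2\log(1/|h|))=o(|h|)$. That would also show that multiplication by $\varphi$ preserves $\lambda_*(\mathbb R)$, which your bound does not give. However, the paper uses the lemma only to obtain the uniform bound $\|\chi W_N\|_{\Lambda_*}\le C_1$, and the little Zygmund property of $f$ comes from $\varepsilon_n\to0$. So your shorter argument is fully adequate for the application.
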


\begin{proof}
We first recall a standard consequence of the Zygmund condition. If $0<|h|\leq 1/2$, then
\begin{equation}\label{eq:first-difference}
  |u(x+h)-u(x)|\leq 4|h|\bigl(1+\log(1/|h|)\bigr)\bigl(\|u\|_\infty+\|u\|_{\Lambda_*}\bigr).
\end{equation}
Indeed, set $D(h)=u(x+h)-u(x)$ and choose an integer $m\geq 0$ such that $1/2\leq 2^m|h|<1$.
Iterating the identity $\Delta_h^2u(x+h)=D(2h)-2D(h)$ yields
\begin{align*}
  |D(h)|&=\Bigl|2^{-m}D(2^mh)-\sum_{j=0}^{m-1}2^{-j-1}\Delta_{2^jh}^2u(x+2^jh)\Bigr|
  \leq 4|h|\,\|u\|_{\infty}+\frac{m}{2}\,|h|\,\|u\|_{\Lambda_*}\\
  &\leq |h|\left(4\|u\|_{\infty}+\frac{\log (1/|h|)}{2\log 2}\,\|u\|_{\Lambda_*}\right),
\end{align*}
which implies \eqref{eq:first-difference}.

For $0<|h|\leq 1/2$, the product rule for second differences,
\begin{align*}
\Delta_h^2(\varphi u)(x)={}&\varphi(x)\Delta_h^2u(x)+\bigl(\varphi(x+h)-\varphi(x)\bigr)\bigl(u(x+h)-u(x)\bigr)\\
&+\bigl(\varphi(x-h)-\varphi(x)\bigr)\bigl(u(x-h)-u(x)\bigr)+u(x)\Delta_h^2\varphi(x),
\end{align*}
together with \eqref{eq:first-difference} gives
\begin{align*}
  |\Delta_h^2(\varphi u)(x)|
     \leq & \|\varphi\|_{\infty}\|u\|_{\Lambda_*}|h|+\|\varphi'\|_{\infty}|h|\bigl(|D(h)|+|D(-h)|\bigr)+\|u\|_{\infty}\|\varphi''\|_{\infty}|h|^2\\
     \leq & C_\varphi |h|\bigl(\|u\|_\infty+\|u\|_{\Lambda_*}\bigr),
\end{align*}
because $t\mapsto t\bigl(1+\log(1/t)\bigr)$ is bounded on $(0, 1/2]$.
For $|h|>1/2$,
$|\Delta_h^2(\varphi u)(x)|\leq 4\|\varphi\|_\infty \|u\|_\infty < 8\|\varphi\|_\infty \|u\|_\infty|h|$.
Taking the supremum over all $x$ and $h \neq 0$ completes the proof.
\end{proof}

The next two claims concern the lacunary sums $S_N$.

\begin{claim}\label{lem:lacunary-L1}
For every $N\geq 1$,
\[
  \int_{-1/2}^{1/2}|S_N(t)|\,dt\geq \frac{\sqrt N}{\sqrt{6}}.
\]
\end{claim}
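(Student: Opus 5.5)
The plan is to combine the $L^2$ and $L^4$ norms through Hölder's inequality in the form $\|S_N\|_2^2\le\|S_N\|_1^{2/3}\|S_N\|_4^{4/3}$, which rearranges to
\[
\|S_N\|_1\ge \frac{\|S_N\|_2^{3}}{\|S_N\|_4^{2}},
\]
with all norms taken on $[-1/2,1/2]$. The $L^1$ bound then follows from an exact $L^2$ computation and an upper bound for the $L^4$ norm.

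\textbf{The $L^2$ norm.} Since the frequencies $2\pi4^k$ are integer multiples of $2\pi$ and are distinct, orthogonality on a unit period gives
\[
\int_{-1/2}^{1/2}S_N^2=\frac N2,\qquad\text{so}\qquad \|S_N\|_2^3=(N/2)^{3/2}.
\]

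\textbf{The $L^4$ norm.} I write $\cos\theta=(e^{i\theta}+e^{-i\theta})/2$, so that $S_N=\frac12\sum_{|j|}e^{2\pi i\,\sigma 4^{k}t}$ is a sum of $2N$ exponentials with frequencies $\pm4^k$. Then
\[
\int S_N^4=\frac1{16}\,\#\{(a,b,c,d):a+b=c+d\},
\]
where $a,b,c,d$ range over the multiset $\{\pm4^k\}$. The key arithmetic fact is that for $\pm4^{k}\pm4^{l}$ the representation is essentially unique. Since $4^k\ge 4\cdot 4^{k-1}$, a sum $\epsilon_14^{k}+\epsilon_24^{l}$ determines the unordered pair, up to the degenerate case where the sum is $0$ (that is, $a=-b$).

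\textbf{Counting solutions.} There are two kinds of solutions. When $\{c,d\}=\{a,b\}$ as an ordered or swapped pair, there are at most $2(2N)^2$ of them. When $a+b=0=c+d$, there are $(2N)^2$. The total is therefore at most $3(2N)^2=12N^2$, which gives $\int S_N^4\le \tfrac{12N^2}{16}=\tfrac34N^2$. In fact, the standard count for lacunary cosines, $3N^2/4-N/4\cdot$ (small), is within this bound.

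\textbf{Conclusion.} Combining the two estimates,
\[
\|S_N\|_1\ge \frac{(N/2)^{3/2}}{(3N^2/4)^{1/2}}=\frac{N^{3/2}}{2\sqrt2}\cdot\frac{2}{\sqrt3 N}=\frac{\sqrt N}{\sqrt6}.
\]

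The main obstacle is the uniqueness-of-representation step. I must check that $4^k\pm4^l=4^{k'}\pm4^{l'}$ forces the same pair. This follows from base-$4$ digit considerations, or from the size comparison $4^k-4^{l}>4^{k-1}+4^{k-1}$ when $l<k$, so that the largest exponent and its sign are determined first, and then the remaining term is determined. I also need to be careful with the case $a=b$, to make sure the bound $3(2N)^2$ still holds; the standard estimate that the number of solutions is at most $2m^2+m^2$ for $m$ distinct frequencies with unique representation handles this.
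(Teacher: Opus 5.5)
Your proposal is correct and follows essentially the same route as the paper. Both arguments use the exact value $\|S_N\|_2^2=N/2$, the fourth-moment bound $\|S_N\|_4^4\le\frac34N^2$ (at most $12N^2$ vanishing frequency combinations, by uniqueness of the representations $\pm4^k\pm4^l$), and the H\"older interpolation $\|S_N\|_1\ge\|S_N\|_2^3/\|S_N\|_4^2$. The only difference is bookkeeping: you count solutions of $a+b=c+d$ split into the cases $\{c,d\}=\{a,b\}$ and $a+b=0=c+d$, whereas the paper counts sign patterns with exactly two positive signs. Your aside about the ``standard count'' adds nothing and can be dropped.
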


\begin{proof}
By orthogonality,
\begin{equation}\label{eq:S-L2}
  \|S_N\|_{L^2(-1/2,1/2)}^2=\frac N2.
\end{equation}
We next show that
\begin{equation}\label{eq:S-L4}
  \|S_N\|_{L^4(-1/2,1/2)}^4\leq \frac34 N^2.
\end{equation}
Writing $\cos(2\pi4^kt)=\frac12\bigl(e^{2\pi i4^kt}+e^{-2\pi i4^kt}\bigr)$, we expand
\[
  S_N(t)^4=\frac1{16}\sum_{\substack{1\leq k_1,k_2,k_3,k_4\le N\\
  \sigma_1,\sigma_2,\sigma_3,\sigma_4\in\{-1,1\}}}e^{2\pi i(\sigma_14^{k_1}+\sigma_24^{k_2}+\sigma_34^{k_3}+\sigma_44^{k_4})t}.
\]
A term has nonzero integral over $[-1/2,1/2]$ only if
$\sigma_14^{k_1}+\sigma_24^{k_2}+\sigma_34^{k_3}+\sigma_44^{k_4}=0$, in which case the integral equals $1$.
By the uniqueness of base-$4$ expansions, a sum of two or three powers of $4$ (repetitions allowed) is never a power of $4$, and
$4^{p}+4^{q}=4^{r}+4^{s}$ holds only if $\{p,q\}=\{r,s\}$ as multisets.
Hence exactly two of the $\sigma_j$ are equal to $1$, and for each of the $\binom42=6$ choices of their positions,
the indices attached to the positive signs can be chosen in at most $N^2$ ways, after which the other two indices are determined up to order.
The number of terms with nonzero integral is therefore at most $12N^2$, which proves \eqref{eq:S-L4}.

By H\"older's inequality,
$\|S_N\|_{L^2}\leq \|S_N\|_{L^1}^{1/3}\|S_N\|_{L^4}^{2/3}$ on $(-1/2,1/2)$. Combined with \eqref{eq:S-L2} and \eqref{eq:S-L4}, this gives
\[
  \|S_N\|_{L^1(-1/2,1/2)}\geq\frac{\|S_N\|_{L^2}^3}{\|S_N\|_{L^4}^2}\geq \frac{(N/2)^{3/2}}{(\sqrt3/2)\,N}=\frac{\sqrt N}{\sqrt{6}}.
  \qedhere
\]
\end{proof}

\begin{claim}\label{lem:lacunary-BMO}
For every $N\geq1$,
\[
  \|S_N\|_{\mathrm{BMO}(\mathbb{R})}\leq 12\sqrt N.
\]
\end{claim}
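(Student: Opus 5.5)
We want $\|S_N\|_{\mathrm{BMO}(\mathbb R)}\le 12\sqrt N$ for every $N\ge1$. The plan is to fix a bounded interval $I$ and bound $\frac1{|I|}\int_I|S_N-c|\,dt$ for a well-chosen constant $c$. We then split $S_N$ according to the scale of $I$, in the same way the proof of Claim~\ref{lem:WN-Zygmund} split the sum by comparing $4^k|s|$ with $1$. Recall that the mean oscillation is at most $2\inf_c\frac1{|I|}\int_I|S_N-c|$.

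\textbf{Large intervals.} If $|I|\ge1$, take $c=0$. Since $S_N$ is $1$-periodic, $I$ is covered by at most $|I|+1\le 2|I|$ translates of $[-1/2,1/2]$. Cauchy--Schwarz together with \eqref{eq:S-L2} gives
\[
\int_{-1/2}^{1/2}|S_N|\le\|S_N\|_{L^2(-1/2,1/2)}=\sqrt{N/2}.
\]
Hence $\frac1{|I|}\int_I|S_N|\le 2\sqrt{N/2}$, and the mean oscillation is at most $2\sqrt2\sqrt N$.

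\textbf{Small intervals.} If $|I|<1$, let $m\ge0$ be the integer with $4^{-m-1}<|I|\le 4^{-m}$, and write $S_N=L+H$.
\begin{itemize}
\item The low-frequency part is $L=\sum_{k\le\min(m,N)}\cos(2\pi4^kt)$. On $I$ each term varies by at most $2\pi4^k|I|$, so
\[
\operatorname{osc}_I L\le 2\pi|I|\sum_{k\le m}4^k\le 2\pi|I|\cdot\tfrac43 4^m\le \tfrac{8\pi}{3}.
\]
Choosing $c=L(\text{center of }I)$ gives $\frac1{|I|}\int_I|L-c|\le 8\pi/3$, which is an absolute constant, so this part needs no $\sqrt N$ at all.
\item The high-frequency part is $H=\sum_{m<k\le N}\cos(2\pi4^kt)$. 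Every frequency $4^k$ with $k\ge m+1$ satisfies $4^k|I|>1$, so each term oscillates at least once across $I$.
\end{itemize}

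\textbf{The main obstacle: an $L^2$ bound for $H$ on $I$.} We aim to show $\frac1{|I|}\int_I|H|^2\le C\,N$ with $C$ an absolute constant. The difficulty is that $I$ is not a full period of every term, so orthogonality holds only approximately. I would try the following.
\begin{itemize}
\item Enlarge $I$ to an interval $I'$ of length exactly $4^{-m}$, which is a full period of all frequencies $4^k$ with $k\ge m$. This loses at most a factor $4$ in the average.
\item Every $H$-frequency is a multiple of $4^m$, so $H$ is $4^{-m}$-periodic. Hence $\frac1{|I'|}\int_{I'}|H|^2$ is an exact average over a period.
\item Orthogonality of distinct frequencies over that period gives the average $(N-m)/2\le N/2$.
\end{itemize}
Then
\[
\frac1{|I|}\int_I|H|\le\Bigl(\frac{4}{|I'|}\int_{I'}|H|^2\Bigr)^{1/2}\le\sqrt{2N}.
\]

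\textbf{Combining.} With $c=L(\text{center})$, the total is
\[
\inf_c\frac1{|I|}\int_I|S_N-c|\le \tfrac{8\pi}{3}+\sqrt2\sqrt N.
\]
Doubling this and using $1\le\sqrt N$ bounds the mean oscillation by
\[
\bigl(\tfrac{16\pi}{3}+2\sqrt2\bigr)\sqrt N<12\sqrt N.
\]
This covers both cases, since $2\sqrt2<12$ as well. If it turns out that the paper's norm does not include the factor $2$, the bound $\frac{8\pi}3+\sqrt2\sqrt N$ is even smaller, so the same estimates suffice. The hardest point is the reduction of $I$ to a full-period interval $I'$. The $4^{-m}$-periodicity of $H$ makes this reduction exact, and that is what lets the per-interval estimate avoid any lacunary (Zygmund/Rudin) inequality.
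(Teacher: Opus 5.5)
Your scheme is essentially the paper's. You take the scale $m$ with $4^{-m-1}<|I|\le 4^{-m}$, split $S_N=L+H$, bound the oscillation of $L$ on $I$ by $8\pi/3$, and bound $H$ in $L^2$. Your handling of $H$ differs: you enlarge $I$ to an interval $I'$ of length $4^{-m}$, a common period of all frequencies $4^k$ with $k>m$, so they are exactly orthogonal on $I'$. For $|I|\ge1$ you cover by unit intervals and use periodicity. Both are tidy alternatives to the paper's direct estimate of the cross terms $\int_I\cos(2\pi4^kt)\cos(2\pi4^jt)\,dt$, and these steps are correct.

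The gap is in the last line, which is exactly where the constant $12$ of the statement is decided. Since $16\pi/3\approx16.76$, we have $\bigl(\frac{16\pi}{3}+2\sqrt2\bigr)\sqrt N\approx19.6\sqrt N$, which is not below $12\sqrt N$. Your estimates actually give the bound $\frac{16\pi}{3}+2\sqrt{2N}$ for the mean oscillation, and this is at most $12\sqrt N$ only when $N\ge4$. For instance, for $N=1$ and $|I|\le 1/4$ (so $H=0$) you only obtain $16\pi/3>12$.

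Your fallback remark does not help. The seminorm is defined with $u_I$, and the factor $2$ is the price of replacing $u_I$ by an arbitrary constant $c$ in an $L^1$ average, so it cannot be discarded.

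The repair is easy; any one of the following works.
\begin{enumerate}[(i)]
\item Work in $L^2$, as the paper does. Since $u_I$ minimizes $\int_I|u-c|^2\,dt$ over constants, the mean oscillation over $I$ is at most $\bigl(|I|^{-1}\int_I|S_N-c|^2\,dt\bigr)^{1/2}$ for every $c$. Minkowski's inequality with your estimates then yields at most $\frac{8\pi}{3}+\sqrt{2N}<12\sqrt N$ for $|I|<1$, and at most $\sqrt N$ for $|I|\ge1$.
\item Exploit your choice $c=L(t_0)$ with $t_0$ the center of $I$. Since $|t-t_0|\le |I|/2$, you get $|L-c|\le 4\pi/3$ on $I$. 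Doubling then gives $\frac{8\pi}{3}+2\sqrt{2N}<11.3\sqrt N$.
\item Handle the cases $N\le 36$ separately via $\|S_N\|_{\mathrm{BMO}(\mathbb R)}\le 2\|S_N\|_\infty\le 2N\le 12\sqrt N$.
\end{enumerate}
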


\begin{proof}
Since the mean oscillation over $I$ is bounded by $\bigl(|I|^{-1}\int_I|S_N-c|^2\,dt\bigr)^{1/2}$ for every constant $c$, it suffices to bound this quantity for a suitable $c$.
Let $I$ be an interval of length $r>0$. For $1\leq k<j$,
\[
\left|\int_I\cos(2\pi4^kt)\cos(2\pi4^jt)\,dt\right|
\leq\frac{1}{2\pi}\left(\frac{1}{4^j-4^k}+\frac{1}{4^j+4^k}\right)\leq\frac{7}{6\pi}\,4^{-j}.
\]

Suppose first that $r\geq1$, and take $c=0$. Expanding the square and using $\sum_{j\ge1}(j-1)4^{-j}=1/9$, we see that
\[
\frac{1}{|I|}\int_I|S_N(t)|^2\,dt
\leq\frac N2+\frac{1}{4\pi r}\sum_{k=1}^N4^{-k}+\frac{7}{3\pi r}\sum_{1\leq k<j\leq N}4^{-j}
\leq\frac N2+\frac{1}{12\pi}+\frac{7}{27\pi} \le N.
\]

Now suppose that $0<r<1$, and choose the integer $m\geq0$ with $4^{-(m+1)}<r\leq4^{-m}$.
Split $S_N=L+H$, where $L$ is the sum over $k\leq\min\{m,N\}$ and $H$ the sum over $m<k\leq N$ (so $H=0$ if $m\ge N$).
For $x,y\in I$,
\[
|L(x)-L(y)|\leq 2\pi r\sum_{k=1}^{m}4^k\le\frac{2\pi r}{3}\,4^{m+1}\leq \frac{8\pi}{3}.
\]
Fix $x_0\in I$ and take $c=L(x_0)$. If $m\ge N$, then $|I|^{-1}\int_I|S_N-c|^2\le(8\pi/3)^2$. If $m<N$, then, bounding each
$\cos^2$ term in $H^2$ by $1$ and estimating the cross terms as above,
\begin{align*}
  \frac{1}{|I|}\int_I|S_N(t)-L(x_0)|^2\,dt&\leq\frac{2}{|I|}\int_I|L(t)-L(x_0)|^2\,dt+\frac{2}{|I|}\int_I|H(t)|^2\,dt\\
 & \leq 2\left(\frac{8\pi}{3}\right)^2+2\Bigl(N-m+\frac{7}{3\pi r}\sum_{m<k<j\leq N}4^{-j}\Bigr)\\
 &\leq 2\left(\frac{8\pi}{3}\right)^2 +2N+\frac{56}{27\pi},
\end{align*}
where we used $\sum_{m<k<j}4^{-j}\le 4^{-m}/9$ and $r>4^{-(m+1)}$. In both cases the right-hand side is at most $144N$, which completes the proof.
\end{proof}

Finally, we estimate the $\mathrm{BMO}$ norm of the product of a $\mathrm{BMO}$ function and a smooth cutoff.

\begin{lemma}\label{lem:BMO-multiplier}
Let $\varphi\in C_c^\infty(\mathbb{R})$, and let $J$ be a bounded interval containing
$\supp\varphi$.
Then, for every $u\in\mathrm{BMO}(\mathbb{R})$,
\[
  \|\varphi u\|_{\mathrm{BMO}(\mathbb{R})}\leq C_{\varphi,J}\left(\|u\|_{\mathrm{BMO}(\mathbb{R})} +\frac{1}{|J|}\int_J|u(x)|\,dx\right),
\]
where $C_{\varphi,J}$ depends only on $\varphi$ and $|J|$.
\end{lemma}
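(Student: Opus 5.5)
We bound the mean oscillation of $\varphi u$ over an arbitrary bounded interval $I$, writing $M=\|u\|_{\mathrm{BMO}(\mathbb R)}$ and $A=|J|^{-1}\int_J|u|\,dx$. We split into cases according to the size of $I$ relative to $J$. Throughout we use the standard fact that if $c$ is any constant, the mean oscillation of $v$ over $I$ is at most $2|I|^{-1}\int_I|v-c|\,dx$. We also use the elementary bound $|u_{I'}-u_{I''}|\le C\log(2+|I''|/|I'|)\,M$ for nested intervals $I'\subset I''$. This follows by the usual dyadic chaining: pass through intervals doubling in length, each step costing at most $2M$.

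\emph{Case 1: $|I|\le|J|$.} We may assume $I$ meets $\supp\varphi\subset J$, since otherwise $\varphi u=0$ on $I$. Then $I\subset\widetilde J$, where $\widetilde J$ is the interval concentric with $J$ of length $3|J|$. We take $c=\varphi(x_I)u_I$, with $x_I$ the center of $I$, and write
\[
\varphi u-\varphi(x_I)u_I=\varphi\,(u-u_I)+(\varphi-\varphi(x_I))\,u_I .
\]
The first term has average absolute value at most $\|\varphi\|_\infty M$ over $I$. The second is at most $\|\varphi'\|_\infty|I|\,|u_I|$.

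To control $|u_I|$, note first that $|u_{\widetilde J}|\le 3A+2M$. Indeed, $|u_{\widetilde J}-u_J|\le 3\cdot\frac{1}{|\widetilde J|}\int_{\widetilde J}|u-u_{\widetilde J}|\le 3M$, and $|u_J|\le A$. Chaining from $I$ up to $\widetilde J$ then gives
\[
|u_I|\le |u_{\widetilde J}|+CM\log\bigl(2+3|J|/|I|\bigr).
\]
Hence
\[
|I|\,|u_I|\le |I|(3A+2M)+CM\,|I|\log\bigl(2+3|J|/|I|\bigr).
\]
Because $t\log(2+3|J|/t)$ is bounded by a multiple of $|J|$ for $0<t\le|J|$, this is at most $C(|J|)(A+M)$. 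The logarithmic factor is the only delicate point, and it is exactly absorbed by the factor $|I|$ coming from the Lipschitz bound on $\varphi$.

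\emph{Case 2: $|I|>|J|$.} We take $c=0$ and estimate
\[
\frac1{|I|}\int_I|\varphi u|\le\frac{\|\varphi\|_\infty}{|J|}\int_J|u|=\|\varphi\|_\infty A,
\]
using $\supp\varphi\subset J$ and $|I|>|J|$.

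Combining the two cases gives the bound with $C_{\varphi,J}$ depending only on $\|\varphi\|_\infty$, $\|\varphi'\|_\infty$ and $|J|$. The step needing care is the chaining estimate for $|u_I|$ in Case~1, which produces the logarithm that the factor $|I|$ must absorb.
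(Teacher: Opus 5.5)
Your proof is correct and follows essentially the same route as the paper. Both split off large intervals, which are bounded by $\|\varphi\|_\infty$ times the average of $|u|$ over $J$. For small intervals, both decompose $\varphi(u-u_I)+(\varphi-c)u_I$ and control $|u_I|$ through the logarithmic growth of BMO averages up to an enlarged concentric interval, absorbing the logarithm into the factor $|I|$. The differences are cosmetic: the paper uses threshold $2|J|$, an interval of length $10|J|$, and the constant $\varphi_I$ in place of $\varphi(x_I)$.

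There is one harmless arithmetic slip: your own estimates give $|u_{\widetilde J}|\le A+3M$, not $3A+2M$.
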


\begin{proof}
Let $I\subset\mathbb{R}$ be an interval. If $I\cap\supp\varphi=\varnothing$, then $\varphi u=0$ on $I$ and there is nothing to prove.

Suppose that $I\cap\supp\varphi\neq\varnothing$ and $|I|\leq 2|J|$.
Let $J^*$ be the interval concentric with $J$ of length $10|J|$. Then $I \subset J^*$ and $2|I|< |J^*|$, so
$|u_I - u_{J^*}| \leq c \log(|J^*|/|I|)\,\|u\|_{\mathrm{BMO}(\mathbb{R})}$ with an absolute constant $c$ (see \cite[Lemma 1.1 (a), p.~217]{Ga}),
and similarly $|u_J-u_{J^*}|\le c\log 10\,\|u\|_{\mathrm{BMO}(\mathbb{R})}$.
Using $\varphi u-\varphi_Iu_I=\varphi(u-u_I)+(\varphi-\varphi_I)u_I$ and $|\varphi-\varphi_I|\le|I|\,\|\varphi'\|_\infty$ on $I$, we find
\begin{align*}
\frac1{|I|}\int_I|\varphi u-(\varphi u)_I|\,dx
&\leq \frac2{|I|}\int_I|\varphi u-\varphi_I u_I|\,dx
\leq 2\|\varphi\|_\infty\|u\|_{\mathrm{BMO}(\mathbb{R})}+2|I|\,\|\varphi'\|_\infty\,|u_I|\\
&\leq 2\|\varphi\|_\infty\|u\|_{\mathrm{BMO}(\mathbb{R})}+2|I|\,\|\varphi'\|_\infty\bigl(|u_J|+|u_I-u_{J^*}|+|u_J-u_{J^*}|\bigr).
\end{align*}
Since $|u_J|\le|J|^{-1}\int_J|u|$, $|I|\le2|J|$, and $|I|\log(|J|/|I|)\le|J|$, the right-hand side is bounded by
$C_{\varphi,J}\bigl(\|u\|_{\mathrm{BMO}(\mathbb{R})}+|J|^{-1}\int_J|u|\bigr)$.

Suppose next that $|I|>2|J|$. Since $\supp\varphi\subset J$,
\[
  \frac1{|I|}\int_I|\varphi u-(\varphi u)_I|\,dx\leq\frac{2}{|I|}\int_{I\cap J}|\varphi u|\,dx
  \le\frac{\|\varphi\|_\infty}{|J|}\int_J|u|\,dx.
\]
Taking the supremum over all intervals $I$ completes the proof.
\end{proof}

\subsection{Absolute continuity and little Zygmund regularity}

Writing $t=(x-x_n)/\ell_n$, we have
\begin{equation}\label{eq:fn-prime}
  f_n'(x)
  =\varepsilon_n\bigl(\chi'(t)W_{N_n}(t)+2\pi\chi(t)S_{N_n}(t)\bigr).
\end{equation}
Since $S_N$ is $1$-periodic and its summands are mutually orthogonal on every interval of length $1$,
\begin{equation}\label{eq:S-L1-J}
  \int_{-3/4}^{3/4}|S_N(t)|\,dt\leq \sqrt{\frac32}\left(\int_{-1}^{1}|S_N(t)|^2\,dt\right)^{1/2}=\sqrt{\frac{3N}{2}}\le 2\sqrt N.
\end{equation}
Consequently, using $\varepsilon_n\le1$ and $\varepsilon_n\sqrt{N_n}=1$,
\begin{equation}\label{eq:block-L1}
  \|f_n'\|_{L^1(\mathbb{R})} = \varepsilon_n \ell_n \int_{-3/4}^{3/4} |\chi'(t) W_{N_n}(t) + 2\pi\chi(t) S_{N_n}(t)| \, dt
\leq \varepsilon_n \ell_n \bigl(\tfrac12\|\chi'\|_{\infty} + 4\pi \sqrt{N_n} \bigr)
\leq C_\chi\ell_n,
\end{equation}
where $C_\chi$ denotes a constant depending only on $\chi$, which may change from line to line.

Hence $\sum_{n}\|f_n'\|_{L^1(\mathbb{R})}\leq C_{\chi}\sum_{n}2^{-3n}<\infty$.
Together with the uniform convergence of $\sum_n f_n$, this shows that $f(x)=\int_{-\infty}^x\sum_n f_n'(t)\,dt$.
Thus $f$ is absolutely continuous on $\mathbb R$, and
$f'=\sum_{n}f_n'$ almost everywhere.

We next show that $f\in\lambda_*(\mathbb{R})$. By Claim~\ref{lem:WN-Zygmund} and Lemma~\ref{lem:Zygmund-multiplier},
\begin{equation}\label{eq:cutoff-uniform-Zygmund}
  \|\chi W_N\|_{\Lambda_*}\leq C_1
\end{equation}
with a constant $C_1$ independent of $N$.
Setting $s=h/\ell_n$, we have
\[
\Delta_h^2f_n(x)=\varepsilon_n\ell_n\,\Delta_s^2(\chi W_{N_n})(t),
\]
and therefore
\begin{equation}\label{eq:block-Zygmund}
\frac{|\Delta_h^2f_n(x)|}{|h|}
  =\varepsilon_n
    \frac{|\Delta_s^2(\chi W_{N_n})(t)|}{|s|}\leq C_1\varepsilon_n
\end{equation}
for all $x\in\mathbb{R}$ and $h\ne0$.
For fixed $x$ and $h$, the second difference $\Delta_h^2f_n(x)$ can be nonzero only if $J_n$ contains one of the points $x-h$, $x$, $x+h$.
Since the intervals $J_n$ are pairwise disjoint, at most three indices $n$ contribute to $\Delta_h^2f(x)=\sum_n\Delta_h^2f_n(x)$.
In particular, \eqref{eq:block-Zygmund} gives $\|f\|_{\Lambda_*}\le 3C_1/2$.

Let $\eta>0$, and choose $N_0$ such that $C_1/n<\eta$ for $n\geq N_0$.
For the finitely many indices $2 \leq n < N_0$, Taylor's theorem gives $|\Delta_h^2f_n(x)|\le\|f_n''\|_{\infty}h^2$, so
\[
\sum_{n=2}^{N_0-1}\frac{|\Delta_h^2f_n(x)|}{|h|}\le|h|\sum_{n=2}^{N_0-1}\|f_n''\|_{\infty}<\eta
\]
for all $x\in\mathbb R$ when $|h|$ is sufficiently small. The indices $n\ge N_0$ contribute at most $3\eta$ by \eqref{eq:block-Zygmund}.
Hence $|\Delta_h^2f(x)|/|h|<4\eta$ for all $x\in\mathbb R$ and all sufficiently small $|h|$, that is, $f\in\lambda_*(\mathbb R)$.

\subsection{Oscillation of the derivative}

We first show that $f' \notin \mathrm{VMO}(\mathbb{R})$. Let
\[
  I_n\coloneqq\bigl[ x_n-\tfrac12\ell_n,\, x_n+\tfrac12\ell_n\bigr].
\]
On $I_n$ we have $\chi=1$ and $\chi'=0$, and the other blocks vanish, so
$f'(x)=2\pi\varepsilon_n S_{N_n}\bigl((x-x_n)/\ell_n\bigr)$ for almost every $x\in I_n$.
Since each cosine in $S_N$ has integral zero over $[-1/2,1/2]$,
\begin{equation}\label{eq:endpoints}
f\bigl(x_n + \tfrac{1}{2}\ell_n\bigr) - f\bigl(x_n - \tfrac{1}{2}\ell_n\bigr) = \int_{I_n} f'(x) \, dx = 0.
\end{equation}
In particular $(f')_{I_n}=0$, and Claim~\ref{lem:lacunary-L1} gives
\begin{equation}\label{not_VMO}
\frac{1}{|I_n|}\int_{I_n}|f'(x)-(f')_{I_n}|\,dx= \frac{1}{|I_n|}\int_{I_n}|f'(x)|\,dx=2\pi\varepsilon_n\int_{-1/2}^{1/2}|S_{N_n}(t)|\,dt\geq\frac{2\pi}{\sqrt{6}}>1.
\end{equation}
Since $|I_n|\to0$, this shows $f'\notin\mathrm{VMO}(\mathbb R)$.

We now show that $f'\in\mathrm{BMO}(\mathbb R)$. Apply Lemma~\ref{lem:BMO-multiplier} with $\varphi=2\pi\chi$, $J=[-\frac34,\frac34]$, and $u=S_N$.
By Claim~\ref{lem:lacunary-BMO} and \eqref{eq:S-L1-J}, both $\|S_N\|_{\mathrm{BMO}(\mathbb R)}$ and $|J|^{-1}\int_J|S_N|$ are bounded by a constant multiple of $\sqrt N$.
Moreover, $\|\chi'W_N\|_{\mathrm{BMO}(\mathbb R)}\le2\|\chi'W_N\|_\infty\le\frac23\|\chi'\|_\infty$. Therefore
\begin{equation}\label{eq:derivative-fn}
   \left\|\chi'W_N+2\pi\chi S_N\right\|_{\mathrm{BMO}(\mathbb{R})}\leq C_{\chi}\sqrt N.
\end{equation}
Since the $\mathrm{BMO}$ seminorm is invariant under translations and dilations, \eqref{eq:fn-prime} and \eqref{eq:derivative-fn} imply
\begin{equation}\label{eq:uniform-block-BMO}
  \|f_n'\|_{\mathrm{BMO}(\mathbb{R})}\leq C_\chi\varepsilon_n\sqrt{N_n}=C_\chi.
\end{equation}

Let $I\subset\mathbb{R}$ be a bounded interval.
If $I$ meets at most one of the intervals $J_n$, then $f'$ coincides on $I$ with a single $f_n'$ (or vanishes), and \eqref{eq:uniform-block-BMO}
bounds the mean oscillation of $f'$ over $I$ by $C_\chi$.
Suppose that $I$ meets at least two of the intervals $J_n$.
Then every intersected interval $J_n$ is contained in $I$, except possibly the two intervals containing the endpoints of $I$.
If $J_n$ is such an exceptional interval, then $I$ contains the gap between $J_n$ and one of its neighbors, so \eqref{eq:gaps} gives
$|I|\ge\frac14\cdot2^{-n}\ge|J_n|$.
Using \eqref{eq:block-L1}, $\ell_n\le|J_n|$, and the disjointness of the intervals $J_n$, we find
\[
  \int_I|f'(x)|\,dx\leq \sum_{J_n\subset I}\|f_n'\|_{L^1(\mathbb R)}+\sum_{J_n\ \text{exceptional}}\|f_n'\|_{L^1(\mathbb R)}
  \leq C_\chi \Bigl(\sum_{J_n\subset I}|J_n|+ 2|I|\Bigr)\leq  3C_\chi|I|.
\]
Consequently,
\[
  \frac1{|I|}\int_I|f'-(f')_I|\,dx\leq\frac2{|I|}\int_I|f'|\,dx\leq 6C_\chi.
\]
Taking the supremum over all intervals $I$ gives $f'\in\mathrm{BMO}(\mathbb{R})$. This completes the proof of Theorem~\ref{thm:main}.

The construction in fact yields the following geometric statement.

\begin{theorem}\label{thm:geometric}
The graph of the function $f$ of Theorem~\ref{thm:main}
is an asymptotically conformal chord-arc curve that is not asymptotically smooth.
\end{theorem}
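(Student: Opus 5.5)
The plan has three parts: chord-arc, asymptotically conformal, and not asymptotically smooth.

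\emph{Chord-arc.} By Theorem~\ref{thm:main}, $f$ is locally absolutely continuous with $f'\in\mathrm{BMO}(\mathbb R)$. Then \cite[Lemma 5.6]{MT2}, quoted at the start of this section, shows that the graph $\Gamma$ is a chord-arc curve.

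\emph{Asymptotically conformal.} Since $f\in\lambda_*(\mathbb R)$, Theorem~\ref{graph-AS} shows that $\Gamma$ is an asymptotically conformal quasicircle.

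\emph{Not asymptotically smooth.} I will use the intervals $I_n=[x_n-\tfrac12\ell_n,x_n+\tfrac12\ell_n]$ and the points $a_n=F(x_n-\tfrac12\ell_n)$, $b_n=F(x_n+\tfrac12\ell_n)$, where $F(x)=x+if(x)$. By \eqref{eq:endpoints}, $f$ takes the same value at the two endpoints of $I_n$, so the chord is horizontal and
\[
|a_n-b_n|=\ell_n\to0.
\]
The arc length of $\wideparen{a_nb_n}$ is
\[
\ell(\wideparen{a_nb_n})=\int_{I_n}\sqrt{1+f'(x)^2}\,dx .
\]
I need this to be at least $(1+c)\ell_n$ for some fixed $c>0$ and all $n$. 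The pointwise inequality I plan to use is
\[
\sqrt{1+s^2}\ge 1+\kappa\min\{|s|,s^2\},
\]
which holds for some absolute $\kappa>0$. For $|s|\le1$ this follows from $\sqrt{1+s^2}-1=s^2/(\sqrt{1+s^2}+1)\ge s^2/(1+\sqrt2)$. For $|s|\ge1$ it follows from $\sqrt{1+s^2}-1\ge (\sqrt2-1)|s|$.

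The main obstacle is that \eqref{not_VMO} only bounds the $L^1$ mean of $|f'|$ from below, which is weaker than what the pointwise inequality needs. I will split $I_n$ into the set $A$ where $|f'|\ge1$ and the set $B$ where $|f'|<1$. On $B$ I use Cauchy--Schwarz:
\[
\int_B f'^2\ge \frac{\bigl(\int_B|f'|\bigr)^2}{|B|}\ge \frac{1}{\ell_n}\Bigl(\int_B|f'|\Bigr)^2 .
\]
By \eqref{not_VMO}, $\int_A|f'|+\int_B|f'|\ge\ell_n$. So either $\int_A|f'|\ge\ell_n/2$ or $\int_B|f'|\ge\ell_n/2$. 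In the first case the pointwise inequality gives excess length at least $\kappa\ell_n/2$. In the second case it gives excess at least $\kappa\ell_n/4$.

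In both cases
\[
\ell(\wideparen{a_nb_n})/|a_n-b_n|\ge1+\kappa/4
\]
for every $n$. Since $|a_n-b_n|\to0$, this contradicts the definition of asymptotic smoothness, so $\Gamma$ is not asymptotically smooth.
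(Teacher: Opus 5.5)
Your proof is correct and follows the paper's argument. You use the same citations for the chord-arc and asymptotic-conformality parts, the same intervals $I_n$ with horizontal chords coming from \eqref{eq:endpoints}, and the same lower bound \eqref{not_VMO} on the mean of $|f'|$.

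The only difference is the last step, and there the splitting into $\{|f'|\ge1\}$ and $\{|f'|<1\}$ with Cauchy--Schwarz is unnecessary. Since $\phi(s)=\sqrt{1+s^2}$ is convex and $\sqrt{1+f'^2}=\phi(|f'|)$, Jensen's inequality gives directly
$\frac{1}{|I_n|}\int_{I_n}\sqrt{1+f'(x)^2}\,dx\ge\sqrt{1+\bigl(\frac{1}{|I_n|}\int_{I_n}|f'(x)|\,dx\bigr)^2}\ge\sqrt{1+2\pi^2/3}$.
So an $L^1$ lower bound on $|f'|$ is exactly what is needed, and the obstacle you identify is not real. This route also gives a better constant than your $1+\kappa/4$.
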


\begin{proof}
Since $f\in\lambda_*(\mathbb R)$, the graph of $f$ is an asymptotically conformal quasicircle by Theorem~\ref{graph-AS}.
Since $f'\in\mathrm{BMO}(\mathbb R)$, it is a chord-arc curve by \cite[Lemma 5.6]{MT2}.
Let $a_n$ and $b_n$ be the points of the graph over the endpoints of $I_n$. By \eqref{eq:endpoints}, $|a_n-b_n|=|I_n|$.
Since $s\mapsto\sqrt{1+s^2}$ is convex, Jensen's inequality and \eqref{not_VMO} give
\[
\frac{\ell(\wideparen{a_nb_n})}{|a_n-b_n|}=\frac{1}{|I_n|}\int_{I_n}\sqrt{1+f'(x)^2}\,dx
\geq\sqrt{1+\Bigl(\frac{1}{|I_n|}\int_{I_n}|f'(x)|\,dx\Bigr)^2}\geq\sqrt{1+\frac{2\pi^2}{3}}.
\]
As $|a_n-b_n|\to 0$ while these ratios stay above $\sqrt{1+2\pi^2/3}>1$, the graph of $f$ is not asymptotically smooth.
\end{proof}

\begin{remark}
The parameters $\varepsilon_n=1/n$ and $N_n=n^2$ are chosen so that $\varepsilon_n\to0$ and $\varepsilon_n\sqrt{N_n}=1$.
The first condition makes the contribution of the $n$-th block to the Zygmund seminorm tend to zero, which gives the little Zygmund condition.
On the other hand, the derivative of the $n$-th block is essentially $2\pi\varepsilon_n S_{N_n}$ rescaled, and by Claims~\ref{lem:lacunary-L1} and \ref{lem:lacunary-BMO} the mean oscillation of $S_{N_n}$ is comparable to $\sqrt{N_n}$.
The second condition therefore keeps the mean oscillation of $f'$ on the shrinking intervals $I_n$ bounded from above and from below.
Any parameters with $\varepsilon_n\to0$ and $\varepsilon_n\sqrt{N_n}\asymp1$ would serve the same purpose, since the argument based on Jensen's inequality in the proof of Theorem~\ref{thm:geometric} only uses a positive lower bound for the mean of $|f'|$ on $I_n$.
\end{remark}

\end{document}